\documentclass[reqno,draft]{amsart}
\usepackage[american]{babel}
\usepackage{amssymb,euscript}
\sloppy
\mathsurround=2pt
\numberwithin{equation}{section}

\newtheorem{theorem}{Theorem}[section]
\newtheorem{lemma}[theorem]{Lemma}
\newtheorem{corollary}[theorem]{Corollary}
\theoremstyle{definition}
\newtheorem{definition}[theorem]{Definition}
\theoremstyle{remark}
\newtheorem{remark}[theorem]{Remark}

\DeclareMathOperator{\sgn}{sign}

\begin{document}

\title[Zakharov--Kuznetsov equation]
{An initial-boundary value problem in a strip for two-dimensional Zakharov--Kuznetsov--Burgers equation}

\author[A.V.~Faminskii]{Andrei~V.~Faminskii}

\thanks{The work was supported by Project 333, State Assignment in the field of scientific activity implementation of Russia}

\address{Department of Mathematics, Peoples' Friendship University of Russia,
Miklukho--Maklai str. 6, Moscow, 117198, Russia}

\email{afaminskii@sci.pfu.edu.ru}

\subjclass[2010]{Primary 35Q53; Secondary 35D30}

\keywords{Zakharov--Kuznetsov equation, Burgers equation, initial-boundary value problem, weak solutions, decay}

\date{}

\begin{abstract}
An initial-boundary value problem in a strip with homogeneous Dirichlet boundary conditions for two-dimensional  Zakharov--Kuznetsov--Burgers equation is considered. Results on global well--posedness and long-time decay of solutions  in $H^s$ for $s\in [0,2]$ are established.
\end{abstract}

\maketitle

\section{Introduction. Description of main results}\label{S1}
 
The goal of this paper is to study global well-posedness and large-time decay of solutions for an initial-boundary value problem on a strip $\Sigma=\mathbb R\times (0,L)=\{(x,y): x\in \mathbb R, 0<y<L\}$ of a given width $L$ for an equation
\begin{equation}\label{1.1}
u_t+u_{xxx}+u_{xyy}+uu_x-\delta(u_{xx}+u_{yy})=0,\qquad \delta=\text{const}>0,
\end{equation}
with an initial condition 
\begin{equation}\label{1.2}
u(0,x,y)=u_0(x,y),\qquad (x,y)\in\Sigma,
\end{equation}
and homogeneous Dirichlet boundary conditions 
\begin{equation}\label{1.3}
u(t,x,0)=u(t,x,L)=0,\qquad (t,x)\in \mathbb R_+\times\mathbb R.
\end{equation}

This equation is referred as Zakharov--Kuznetsov--Burgers equation because it originates from Zakharov--Kuznetsov equation
\begin{equation*}
u_t+u_{xxx}+u_{xyy}+uu_x=0
\end{equation*}
supplemented with parabolic terms as in Burgers equation. Zakharov--Kuznetsov equation is a multi-dimensional generalization of Korteweg--de~Vries equation
\begin{equation*}
u_t+u_{xxx}+uu_x=0
\end{equation*} 
and is considered as a model equation for non-linear waves propagating in dispersive media in the preassigned direction $x$ with deformations in the transverse direction $y$. For the first time it was derived in \cite{ZK} for ion-acoustic waves in magnetized plasma. Equation \eqref{1.1} can be treated as as a model equation for non-linear wave processes including both dispersion and dissipation. 

The theory of well-posedness for Zakharov--Kuznetsov equation is most developed for the initial value problem and for initial-boundary value problems on domains of a type $I\times \mathbb R$, where $I$ is an interval (bounded or unbounded) on the variable $x$, that is the variable $y$ varies in the whole line (see, for example, references in \cite{F14, STW}).  

However, it seems more natural from the physical point of view to consider domains, where the variable $y$ varies in a bounded interval. In fact, it turned out that it is more difficult to study such problems than the aforementioned ones and there are only a few results on the matter.

In \cite{LPS} an initial-boundary value problem for Zakharov--Kuznetov equation in the strip $\Sigma$ with periodic boundary conditions is considered and local well-posedness result is established in the spaces $H^s$ for $s>3/2$. Initial-boundary value problems in the strip $\Sigma$ with homogeneous boundary conditions of different types: Dirichlet, Neumann or periodic are studied in \cite{BF13} and results on global existence and uniqueness in classes of weak solutions with power weights at $+\infty$ are established. For Dirichlet boundary conditions these results are supplemented in \cite{F14} with results of exponential long-time decay of small solutions in $L_2$ spaces with exponential weights at $+\infty$. In \cite{LT, L} an initial-boundary value problem in a half-strip $\mathbb R_+\times (0,L)$ with homogeneous Dirichlet boundary conditions is studied and global well-posedness in spaces $L_2$ and $H^1$ with exponential weights when $x\to +\infty$ as well as exponential decay as $t\to +\infty$ of small solutions are proved. For a bounded rectangle global well-posedness results can be found in \cite{DL, STW} and exponential long-time decay of small solutions in $L_2$ --- in \cite{DL}.

Of course, the presence of any regularization can improve results on well-posedness and long-time decay of solutions. 

In \cite{F14} the problem in the strip $\Sigma$ with initial and boundary conditions \eqref{1.2}, \eqref{1.3} is studied for an equation
$$
u_t+u_{xxx}+u_{xyy}+uu_x-(a_1(x,y)u_x)_x -(a_2(x,y)u_y)_y +a_0(x.y)u=0.
$$
The functions $a_1, a_2$ are assumed to be non-negative, that is the parabolic damping can degenerate. Certain results on global existence and uniqueness of weak solutions in $L_2$ and $H^1$ spaces (possibly weighted at $+\infty$) as well as their long-time decay in $L_2$ (not only for small solutions) are established when the parabolic damping is effective either at both inifinities or only at $+\infty$ or only at$-\infty$ or even be absent. For example, for $u_0\in L_2(\Sigma)$ and $a_j\in L_\infty(\Sigma)$ satisfying inequalities
\begin{gather*}
a_2(x,y)\geq \beta_2(x)\geq 0,\quad a_0(x,y)\geq \beta_0(x) \quad\forall (x,y)\in\Sigma,\\
\frac{\pi^2\beta_2(x)}{L^2}+\beta_0(x)\geq \beta=\text{const}>0\qquad\forall x\in\mathbb R
\end{gather*}
(that is either dissipation or absorption must be effective at every point) 
there exists a global solution such that
$$
\|u(t,\cdot,\cdot)\|_{L_2(\Sigma)} \leq e^{-\beta t}\|u_0\|_{L_2(\Sigma)}\qquad \forall t\geq 0.
$$
If dissipation is effective at both infinities, that is $a_1, a_2\geq a=\text{const}>0$ for $|x|\geq R$, the problem is globally well-posed and similar exponential decal in $L_2(\Sigma)$ is valid without any absorption ($a_0\geq 0$).

In  \cite{L1} the same initial-boundary value problem is studied for an equation
$$
u_t+u_{xxx}+u_{xyy}+uu_x-u_{xx}=0
$$
and results on global well-posedness in certain class of weak and regular solutions (decaying exponentially as $x\to+\infty$) as well as their exponential long-time decay in $L_2$ (for weak solutions) and $H^1$ (for regular ones) norms are obtained.

Note that without any additional damping of Zakharov--Kuznetsov equation the  long-time decay of solutions to the considered problem is impossible even in $L_2$ because of the conservation law
\begin{equation*}
\iint_\Sigma u^2(t,x,y)\,dxdy=\text{const}.
\end{equation*}

In the present paper results on global well-posedness and long-time decay of solutions to problem \eqref{1.1}--\eqref{1.3} are established in the spaces $H^s(\Sigma)$ for $s\in [0,2]$.

Introduce the following notation. For an integer $k\geq 0$ let
\begin{equation*}
|D^k\varphi|=\Bigl(\sum_{k_1+k_2=k}(\partial^{k_1}_x\partial_y^{k_2}\varphi)^2\Bigr)^{1/2}, \qquad
|D\varphi|=|D^1\varphi|.
\end{equation*}

Let $L_p=L_p(\Sigma)$ for $p\in [1,+\infty]$, $H^s=H^s(\Sigma)$, $H^s_0=H^s_0(\Sigma)$ for $s\in\mathbb R$.

For any $T_1<T_2$ let $\Pi_{T_1,T_2} = (T_1,T_2)\times \Sigma$, let $\Pi_T=\Pi_{0,T}$, $\Pi=\Pi_{+\infty} = \mathbb R_+ \times \Sigma$.
For $s\geq 0$ define a functional space 
$$
X^s(\Pi_{T_1,T_2}) =C([T_1,T_2];H^s)\cap L_2(T_1,T_2;H^{s+1}\cap H^1_0).
$$
We construct solutions to the considered problem lying in spaces $X^s(\Pi_T)$ for any $T>0$ if $s\in [0,2]$.

The main result of the paper is the following theorem.

\begin{theorem}\label{T1.1}
Let $u_0\in H^s$ for a certain $s\in [0,2]$ and, in addition, $u_0(x,0)=u_0(x,L)\equiv 0$ if $s>1/2$ and 
$\bigl(y^{-1/2}+(L-y)^{-1/2}\bigr)u_0\in L_2$ if $s=1/2$. Then there exists a unique solution to problem \eqref{1.1}--\eqref{1.3} $u \in X^s(\Pi_T)$ for any $T>0$. Moreover, there exist a constant $\beta(s)>0$ and a function $\sigma_s(\theta)$ nondecreasing with respect to $\theta\geq 0$ such that
\begin{equation}\label{1.4}
\|u(t,\cdot,\cdot)\|_{H^s} \leq \sigma_s(\|u_0\|_{H^{s_0}})e^{-\beta(s)t}\|u_0\|_{H^s} \qquad \forall\ t\geq 0,
\end{equation}
where $s_0=0$ if $s\in [0,1]$ , $s_0=1$ if $s\in (1,2]$.
\end{theorem}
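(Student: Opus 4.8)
The plan is to construct the solution by a regularization/Galerkin scheme adapted from \cite{BF13, F14}: first discretize in $y$ in the orthonormal basis $\{\sin(k\pi y/L)\}_{k\ge1}$ of $L_2(0,L)$ (together with a cut-off in $x$ turning $\Sigma$ into a bounded rectangle), derive uniform a priori estimates in $X^s(\Pi_T)$, and pass to the limit. The sine basis is chosen because each approximation $u^n$ satisfies not only $u^n=0$ but also $\partial_y^2u^n=0$ on $y=0,L$, so $\Delta u^n$, $\Delta^2u^n$ and all even-order normal derivatives vanish there; consequently the boundary integrals produced when integrating by parts in $y$ in the higher-order energy estimates disappear. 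For the limit solution the identity $u_{yy}=0$ on $y=0,L$, needed when $s>1$, can also be recovered a posteriori: evaluating \eqref{1.1} on the lines $y=0,L$, where $u_t=u_x=u_{xx}=u_{xxx}=uu_x=0$, forces $\partial_x(u_{yy})=\delta u_{yy}$ along each line, whose only $L_2(\mathbb R)$ solution in $x$ is $0$. Uniqueness will be treated separately and only at the $L_2$ level.

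The backbone is the $L_2$ estimate. Multiplying \eqref{1.1} by $u$ and integrating over $\Sigma$, the nonlinear term $\iint u^2u_x=\tfrac13\iint(u^3)_x$ and the dispersive contribution $\iint u(u_{xxx}+u_{xyy})$ both vanish after integrating by parts (using $u=0$, hence $u_x=0$, on $y=0,L$), leaving $\tfrac12\tfrac{d}{dt}\|u\|_{L_2}^2+\delta\|Du\|_{L_2}^2=0$. The one-dimensional Poincaré inequality in $y$, $\|u\|_{L_2}^2\le(L/\pi)^2\|u_y\|_{L_2}^2$, yields $\|u(t,\cdot,\cdot)\|_{L_2}\le e^{-\delta\pi^2t/L^2}\|u_0\|_{L_2}$, i.e.\ \eqref{1.4} for $s=0$ with $\beta(0)=\delta\pi^2/L^2$, together with the crucial dissipation bound $\int_0^\infty\|Du(t,\cdot,\cdot)\|_{L_2}^2\,dt\le(2\delta)^{-1}\|u_0\|_{L_2}^2$.

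For $s=1$ I would multiply by $-\Delta u$ (admissible in the approximation) and get $\tfrac12\tfrac{d}{dt}\|Du\|_{L_2}^2+\delta\|\Delta u\|_{L_2}^2=-\tfrac12\iint u_x|Du|^2$, the dispersive term again dropping out. The right-hand side is bounded by $\tfrac12\iint|Du|^3$, and by the two-dimensional Gagliardo--Nirenberg inequality together with the identity $\|D^2u\|_{L_2}=\|\Delta u\|_{L_2}$ valid for $u\in H^2\cap H^1_0$ on the strip, $\iint|Du|^3\lesssim\|Du\|_{L_2}^2\|\Delta u\|_{L_2}$; Young's inequality then gives $\tfrac{d}{dt}\|Du\|_{L_2}^2+\delta\|\Delta u\|_{L_2}^2\le C\|Du\|_{L_2}^2\cdot\|Du\|_{L_2}^2$. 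Using the spectral inequality $\|\Delta u\|_{L_2}^2\ge(\pi/L)^2\|Du\|_{L_2}^2$ to extract a decay term and then Gronwall's inequality with the dissipation bound $\int_0^\infty\|Du\|_{L_2}^2\,dt\lesssim\|u_0\|_{L_2}^2$, one obtains $\|Du(t,\cdot,\cdot)\|_{L_2}^2\le e^{-\beta(1)t}\sigma_1(\|u_0\|_{L_2})^2\|Du_0\|_{L_2}^2$ for a suitable $\beta(1)>0$, which combined with the $s=0$ bound is \eqref{1.4} for $s=1$; the same inequality also yields a global bound for $\int_0^\infty\|\Delta u\|_{L_2}^2\,dt$ in terms of $\|u_0\|_{H^1}$. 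The case $s=2$ is analogous: multiplying by $\Delta^2u$ (whose boundary terms vanish in the approximation) gives $\tfrac12\tfrac{d}{dt}\|\Delta u\|_{L_2}^2+\delta\|D\Delta u\|_{L_2}^2=-\iint\Delta u\,\Delta(uu_x)$, and the worst term in $\Delta(uu_x)$, namely $u\,\partial_x\Delta u$, is handled after integrating by parts by $\|u_x\|_{L_4}\|\Delta u\|_{L_4}\|\Delta u\|_{L_2}$ and interpolation, the lower-order factors being absorbed into the $H^1$-norm; hence the resulting differential inequality has coefficients depending on $\|u\|_{H^1}$, which is the source of the choice $s_0=1$ on this range, and Gronwall together with the bound on $\int_0^\infty\|\Delta u\|_{L_2}^2\,dt$ (plus $\|D\Delta u\|_{L_2}^2\ge(\pi/L)^2\|\Delta u\|_{L_2}^2$) closes it with some $\beta(2)>0$. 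Fractional $s\in(0,1)\cup(1,2)$ then follow by interpolating the solution operator between the integer endpoints; the endpoint $s=1/2$ requires the weighted hypothesis $\bigl(y^{-1/2}+(L-y)^{-1/2}\bigr)u_0\in L_2$, which is precisely the Lions--Magenes/Hardy characterization of the trace space at that borderline.

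Finally, uniqueness: for two solutions $u,v\in X^0(\Pi_T)$ the difference $w=u-v$ solves \eqref{1.1} with nonlinearity $\tfrac12\bigl((u+v)w\bigr)_x$; multiplying by $w$ gives $\tfrac12\tfrac{d}{dt}\|w\|_{L_2}^2+\delta\|Dw\|_{L_2}^2=-\tfrac14\iint(u+v)_xw^2\le\tfrac\delta2\|Dw\|_{L_2}^2+C\|(u+v)_x\|_{L_2}^2\|w\|_{L_2}^2$ (Hölder, $\|w\|_{L_4}^2\lesssim\|w\|_{L_2}\|Dw\|_{L_2}$, Poincaré, Young), and since $\|(u+v)_x\|_{L_2}^2\in L_1(0,T)$, Gronwall forces $w\equiv0$. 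I expect the main technical obstacles to be (i) the treatment — or its circumvention via the sine basis, backed by the Liouville-type identity $u_{yy}|_{y=0,L}=0$ — of the boundary integrals on $y=0,L$ in the $s=2$ and fractional $s>1$ energy estimates, and (ii) the bookkeeping needed to make $\sigma_s$ depend only on $\|u_0\|_{H^{s_0}}$ and not on $\|u_0\|_{H^s}$, which rests on combining the Poincaré-generated exponential factors with the finite dissipation integrals in the correct order.
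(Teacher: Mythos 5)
Your proposal is correct in substance and reaches every assertion of Theorem~\ref{T1.1}, but at several key points it takes a genuinely different route from the paper, so a comparison is worthwhile. On the construction: you propose a sine-basis Galerkin scheme in $y$, whereas the paper first solves the linear problem \eqref{2.1} (Lemma~\ref{L2.1} uses exactly your sine expansion together with the Fourier transform in $x$) and then runs a contraction argument on the resulting solution map, with a cut-off nonlinearity $g_h$ at the $L_2$ level and the plain nonlinearity at the $H^1$ and $H^2$ levels; both routes are standard, and your choice of basis disposes of the boundary terms at $y=0,L$ in the second-order estimate in essentially the same way the paper does (approximation of the data as in Lemma~\ref{L2.1} before passing to the limit in \eqref{2.12}). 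More substantively, for the global $H^1$ bound the paper cancels the problematic term $2\int_0^t\iint uu_x(u_{xx}+u_{yy})\,dxdyd\tau$ \emph{exactly}, by adding the identity \eqref{3.22} coming from the multiplier $-u^2$ and working with the functional in \eqref{3.23}; you instead estimate $-\tfrac12\iint u_x|Du|^2\,dxdy$ by Gagliardo--Nirenberg and close with Gronwall using the dissipation bound $2\delta\int_0^{\infty}\iint|Du|^2\,dxdydt\leq\|u_0\|_{L_2}^2$ from \eqref{3.11}. Your version is cruder but valid, and it buys something: it yields the global bound and the exponential decay in one stroke, the Gronwall factor $\exp\bigl(c\int_0^\infty\iint|Du|^2\,dxdydt\bigr)$ serving directly as $\sigma_1(\|u_0\|_{L_2})$, whereas the paper's decay argument (Lemmas~\ref{L4.2}, \ref{L4.3}) instead waits until a time $T_1(\|u_0\|_{L_2})$ at which the exponentially decaying $L_2$ norm falls below the explicit threshold \eqref{4.5}, after which the nonlinearity is absorbed by the dissipation. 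The same comparison applies at $s=2$, where your integrable coefficient is $\int_0^\infty\iint|D^2u|^2\,dxdydt$, controlled by $\|u_0\|_{H^1}$, which correctly explains $s_0=1$ on that range. Three caveats. (i) For fractional $s$ one cannot simply ``interpolate the solution operator'': the map $u_0\mapsto u$ is nonlinear, and the paper invokes Tartar's theorem (Lemma~\ref{L1.3}), whose hypotheses (Lipschitz bound at the lower endpoint, linear-growth bound at the upper one) your endpoint estimates do supply but which must be named; for the fractional decay the paper additionally uses the smoothing property (Corollary~\ref{C3.8}) to interpolate between $\|u(t)\|_{L_2}$ and $\|u(t)\|_{H^1}$ for $t\geq 1$ as in \eqref{4.12}. (ii) Your a posteriori Liouville argument for $u_{yy}|_{y=0,L}=0$ is only heuristic at the regularity of $X^2(\Pi_T)$, since $u_t$ and $u_{xxx}$ need not have traces on the lines $y=0,L$; the rigorous route is the one you also propose, namely imposing the condition at the level of the approximations and passing to the limit in the integrated identity. (iii) At the $L_2$ level the Galerkin limit lands only in $L_\infty(0,T;L_2)\cap L_2(0,T;H^1)$; to recover $u\in C([0,T];L_2)$ and the validity of \eqref{3.3} for the limit function one must re-read $u$ as the generalized solution of the linear problem with right-hand side $-\tfrac12(u^2)_x$, $u^2\in L_2(\Pi_T)$, which is exactly how the paper derives \eqref{3.11}.
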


Compare this result with the one-dimensional case from \cite{CCKR} (in fact, the present paper is inspired by that one),
where the initial value problem is considered for damped Korteweg--de~Vries--Burgers equation
$$
u_t+u_{xxx}+uu_x-u_{xx}+a(x)u=0.
$$
The function $a\equiv a_0+a_1$, where $a_0=\text{const}>0$, $a_1\in H^1(\mathbb R)\cap L_p(\mathbb R)$ for $p\in [1,+\infty)$ and $\|a_1\|_{L_p(\mathbb R)}$ is small in some sense (depending on $p$ and $a_0$). Then for the initial data from $H^s(\mathbb R)$, $s\in [0,3]$, the considered problem is globally well-posed with exponential decay as $t\to +\infty$ of solutions also in the space $H^s(\mathbb R)$. Note that the function $a$ is allowed to change sign but, of course, the presence of certain absorption is provided by the constant $a_0$.

Moreover, for pure Korteweg--de~Vries--Burgers equation ($a\equiv 0$) exponential decay of solutions to the initial value problem is in general case impossible even in $L_2(\mathbb R)$, because it is proved in \cite{ABS} that for $u_0\in L_2(\mathbb R)\cap L_1(\mathbb R)$ a corresponding solution to the initial value problem satisfies an inequality
\begin{equation*}
\|u(t,\cdot)\|_{L_2(\mathbb R)}\leq c(1+t)^{-1/4}\qquad \forall t\geq 0
\end{equation*}
and this result is sharp, so here dissipation without absorption ensures only power decay.

The idea that homogeneous Dirichlet boundary conditions in the horizontal strip of a finite width provide internal dissipation for Zakharov--Kuznetsov  equation which, in particular, yields exponential long-time decay of solutions was for the first time used in \cite{LT}.

Note that it is shown in \cite{CCFN} that the exponential long-time decay of solutions in $L_2(\mathbb R)$ holds for the initial value problem for damped Korteweg--de~Vries equation
$$
u_t+u_{xxx}+uu_x+a(x)=0
$$
even in the case of a localized absorption, that is if $a(x)\geq 0$ $\forall x\in\mathbb R$, $a(x)\geq a_0=\text{const}>0$ for $|x|\geq R$.

Long-time behavior of solutions at $H^2(\mathbb R)$ level for the initial value problem for generalized Korteweg--de~Vries--Burgers equation with constant absorption with the use of the global attractors theory is studied in \cite{dlotko}.

Further let $\eta(x)$ denotes a cut-off function, namely, $\eta$ is an infinitely smooth non-decreasing on $\mathbb R$ function such that $\eta(x)=0$ when $x\leq 0$, $\eta(x)=1$ when $x\geq 1$, $\eta(x)+\eta(1-x)\equiv 1$.

We omit limits of integration in integrals over the whole strip $\Sigma$.

The following interpolating inequality from \cite{LSU} is crucial for the study.

\begin{lemma}\label{L1.2}
Let $k$ be natural, $m\in [0,k)$ -- integer, $q\in [2,+\infty]$ if $k-m\geq 2$ and $q\in [2,+\infty)$ in other cases. Then there exists a constant $c>0$ such that for every function $\varphi(x,y)\in H^k$ the following inequality holds
\begin{equation}\label{1.5}
\bigl\| |D^m\varphi|\bigr\|_{L_q} \leq c 
\bigl\| |D^k\varphi|\bigr\|^{2s}_{L_2}
\bigl\| \varphi\bigr\|^{1-2s}_{L_2} 
+c\bigl\|\varphi\bigr\|_{L_2},
\end{equation}
where $\displaystyle{s=\frac{m+1}{2k}-\frac{1}{kq}}$. 
\end{lemma}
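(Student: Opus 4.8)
The plan is to prove \eqref{1.5} by reducing it to the classical scale-invariant Gagliardo--Nirenberg inequality on the whole plane $\mathbb R^2$ and then accounting for the finite width of the strip, which is exactly what produces the additive remainder. First I would invoke (or, for a self-contained treatment, establish by the standard dilation argument) the homogeneous inequality on $\mathbb R^2$: for every $w\in H^k(\mathbb R^2)$,
\begin{equation*}
\bigl\||D^m w|\bigr\|_{L_q(\mathbb R^2)}\leq c\bigl\||D^k w|\bigr\|_{L_2(\mathbb R^2)}^{2s}\bigl\|w\bigr\|_{L_2(\mathbb R^2)}^{1-2s}.
\end{equation*}
Here the exponent $2s=\frac{m+1}{k}-\frac{2}{kq}$ is forced by the two-dimensional scaling $w\mapsto w(\lambda\,\cdot)$, and a short computation confirms that it agrees with the value $s=\frac{m+1}{2k}-\frac1{kq}$ of the statement. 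The endpoint $q=+\infty$ is admissible precisely when $k-m\geq 2$, since that is the threshold for the embedding $H^{k-m}(\mathbb R^2)\hookrightarrow L_\infty$; one also checks that $0\leq 2s<1$ throughout the admissible range (with $2s=1$ occurring only at the excluded endpoint).

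Next I would transfer this to the strip by a bounded linear extension operator $E\colon H^k\to H^k(\mathbb R^2)$ that is simultaneously bounded from $L_2$ to $L_2(\mathbb R^2)$. Such an $E$ is built by reflecting $\varphi$ finitely many times across the two lines $y=0$ and $y=L$, producing an extension $\widetilde\varphi$ to a slightly wider strip $\mathbb R\times(-a,L+a)$, and then multiplying by a smooth cut-off $\chi(y)$ supported there. The key structural feature is that each reflection turns a \emph{pure} $k$-th order derivative of $\varphi$ into a combination of pure $k$-th order derivatives of $\varphi$ at stretched points, so that $\||D^k\widetilde\varphi|\|_{L_2(\mathbb R\times(-a,L+a))}\leq c\||D^k\varphi|\|_{L_2}$. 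Applying the whole-plane inequality to $w=E\varphi$ and using that the restriction of $|D^m(E\varphi)|$ to $\Sigma$ equals $|D^m\varphi|$ gives
\begin{equation*}
\bigl\||D^m\varphi|\bigr\|_{L_q}\leq\bigl\||D^m(E\varphi)|\bigr\|_{L_q(\mathbb R^2)}\leq c\bigl\||D^k(E\varphi)|\bigr\|_{L_2(\mathbb R^2)}^{2s}\bigl\|E\varphi\bigr\|_{L_2(\mathbb R^2)}^{1-2s}.
\end{equation*}

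To close, it suffices to have $\|E\varphi\|_{L_2(\mathbb R^2)}\leq c\|\varphi\|_{L_2}$ (immediate from boundedness of $E$ on $L_2$) together with the seminorm bound $\||D^k(E\varphi)|\|_{L_2(\mathbb R^2)}\leq c\bigl(\||D^k\varphi|\|_{L_2}+\|\varphi\|_{L_2}\bigr)$. Granting the latter, the elementary subadditivity $(a+b)^{2s}\leq a^{2s}+b^{2s}$, valid since $0\leq 2s<1$, yields
\begin{equation*}
\bigl(\||D^k\varphi|\|_{L_2}+\|\varphi\|_{L_2}\bigr)^{2s}\|\varphi\|_{L_2}^{1-2s}\leq \||D^k\varphi|\|_{L_2}^{2s}\|\varphi\|_{L_2}^{1-2s}+\|\varphi\|_{L_2},
\end{equation*}
which is exactly the right-hand side of \eqref{1.5}.

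The main obstacle, and the only place where genuine work is hidden, is the extension seminorm bound. Differentiating $\chi\widetilde\varphi$ by the Leibniz rule (the cut-off depends on $y$ alone) gives $|D^k(\chi\widetilde\varphi)|\leq c\sum_{j=0}^k|D^j\widetilde\varphi|$ on the enlarged strip, so the bound reduces to controlling the intermediate seminorms $\||D^j\varphi|\|_{L_2}$, $0<j<k$, by $\||D^k\varphi|\|_{L_2}+\|\varphi\|_{L_2}$. This is the special case $q=2$ of \eqref{1.5} itself (where $2s=j/k$), so the scheme is mildly circular and must be broken by first settling that case independently. I would do so by taking the partial Fourier transform in $x$, which rewrites the seminorms as $\int_{\mathbb R}\sum_{a+b=j}|\xi|^{2a}\|\partial_y^b\widehat\varphi(\xi,\cdot)\|_{L_2(0,L)}^2\,d\xi$ and reduces the claim, for each frozen frequency $\xi$, to one-dimensional interpolation on a bounded $y$-interval; the $\xi$-integration then reassembles the global seminorms. (Alternatively one may run an induction on $k$.) Once the $q=2$ case is established, the extension estimate follows and the reduction above completes the proof.
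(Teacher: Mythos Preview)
The paper does not prove this lemma; it is stated as a quotation from \cite{LSU} and used as a black box throughout. Your proposal therefore supplies what the paper deliberately omits.

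The route you outline is correct and is essentially the standard way such inequalities are proved on uniformly regular domains: start from the homogeneous Gagliardo--Nirenberg inequality on $\mathbb R^2$ (with the exponent fixed by two-dimensional scaling, matching the stated $s$), transfer it to $\Sigma$ via a higher-order reflection extension plus a $y$-cutoff, and let the Leibniz terms from the cutoff produce the additive $\|\varphi\|_{L_2}$. Your observation that the reflection step preserves pure $k$-th order seminorms, so that the only leakage to intermediate derivatives comes from the cutoff, is the right structural point; and the subadditivity $(a+b)^{2s}\le a^{2s}+b^{2s}$ is legitimate since $2s<1$ throughout the admissible range, as you checked.

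The one place that needs a little more than you wrote is the $q=2$ case you isolate to break the circularity. Your partial-Fourier-in-$x$ reduction is the right idea, but for the mixed terms $|\xi|^{2a}\|\partial_y^b\widehat\varphi(\xi,\cdot)\|_{L_2(0,L)}^2$ with $a+b=j$ you need one more step: apply the one-dimensional $\varepsilon$-interpolation $\|\partial_y^b\psi\|_{L_2(0,L)}^2\le \varepsilon\|\partial_y^{k-a}\psi\|_{L_2(0,L)}^2+c(\varepsilon)\|\psi\|_{L_2(0,L)}^2$ and then absorb $|\xi|^{2a}\|\partial_y^{k-a}\widehat\varphi\|^2$ into the top-order seminorm (it is one of the summands in $\||D^k\varphi|\|_{L_2}^2$) and $|\xi|^{2a}\|\widehat\varphi\|^2$ into $\||D^j\varphi|\|_{L_2}^2$ via Young's inequality with exponents $k/a$ and $k/(k-a)$. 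With that detail filled in, the argument is complete.
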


The use of nonlinear interpolation in this paper is based on the following
result from \cite{Tar}.

\begin{lemma}[Tartar]\label{L1.3}
Let $B_0^j$ and $B_1^j$ be Banach spaces such, that 
$B_1^j\subset B_0^j$ with continuous inclusion mappings, $j=1, 2$. Let
$B_\theta^j =(B_0^j,B_1^j)_{\theta,2}$, $\theta\in (0,1)$, be a space,
constructed by the method of real interpolation. Assume, that an operator
$\mathcal A$ maps $B_0^1$ into $B_0^2$, $B_1^1$ into $B_1^2$, where for any
$f,g \in B_0^1$
$$
\|\mathcal A f - \mathcal A g\|_{B_0^2} \leq 
c_1(\|f\|_{B_0^1},\|g\|_{B_0^1})\|f-g\|_{B_0^1},
$$
and for any $h\in B_1^1$
$$
\|\mathcal A h\|_{B_1^2} \leq c_2(\|h\|_{B_0^1})\|h\|_{B_1^1}.
$$
Then for any $\theta\in (0,1)$ the operator $\mathcal A$ maps $B_\theta^1$ into
$B_\theta^2$ and for any $f\in B_\theta^1$
$$
\|\mathcal A f\|_{B_\theta^2} \leq c(\|f\|_{B_0^1})\|f\|_{B_\theta^1},
$$
where all the functions $c_1, c_2, c$ are nondecreasing with respect to their arguments.

\end{lemma}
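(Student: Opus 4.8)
The plan is to work directly with Peetre's $K$-functional realization of the real interpolation norm and to route both hypotheses through a single telescoping decomposition. Recall that for a compatible couple $(B_0,B_1)$ and $t>0$ one sets
$$
K(t,a;B_0,B_1)=\inf_{a=a_0+a_1}\bigl(\|a_0\|_{B_0}+t\|a_1\|_{B_1}\bigr),
$$
and that, with the second index equal to $2$,
$$
\|a\|_{(B_0,B_1)_{\theta,2}}=\Bigl(\int_0^\infty\bigl(t^{-\theta}K(t,a;B_0,B_1)\bigr)^2\,\frac{dt}{t}\Bigr)^{1/2}.
$$
Since $B_\theta^1$ embeds continuously into $B_0^1$, any $f\in B_\theta^1$ has finite $B_0^1$-norm, so the arguments of $c_1,c_2$ appearing below are meaningful, and $\mathcal Af\in B_0^2$ is well defined.

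Fix $f\in B_\theta^1$ and $t>0$. First I would select a near-optimal splitting $f=f_0(t)+f_1(t)$ with $f_0(t)\in B_0^1$, $f_1(t)\in B_1^1$ and
$$
\|f_0(t)\|_{B_0^1}+t\|f_1(t)\|_{B_1^1}\leq 2K(t,f;B_0^1,B_1^1).
$$
Because the trivial splitting $f=f+0$ gives $K(t,f;B_0^1,B_1^1)\leq\|f\|_{B_0^1}$, this forces $\|f_0(t)\|_{B_0^1}\leq 2\|f\|_{B_0^1}$, whence, using $f_1(t)=f-f_0(t)$, the crucial uniform estimate $\|f_1(t)\|_{B_0^1}\leq 3\|f\|_{B_0^1}$, valid for every $t$. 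This is the step on which everything turns: it tames the variable nonlinear constants by a bound independent of $t$.

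Next I would split the image as $\mathcal Af=(\mathcal Af-\mathcal Af_1(t))+\mathcal Af_1(t)$ and use the two summands as the $B_0^2$- and $B_1^2$-components in the definition of $K(t,\mathcal Af;B_0^2,B_1^2)$. The Lipschitz hypothesis applied to the pair $f,f_1(t)$ gives $\|\mathcal Af-\mathcal Af_1(t)\|_{B_0^2}\leq c_1(\|f\|_{B_0^1},\|f_1(t)\|_{B_0^1})\,\|f_0(t)\|_{B_0^1}$ (since $f-f_1(t)=f_0(t)$), while the boundedness hypothesis gives $\|\mathcal Af_1(t)\|_{B_1^2}\leq c_2(\|f_1(t)\|_{B_0^1})\,\|f_1(t)\|_{B_1^1}$. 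As $c_1,c_2$ are nondecreasing and all $B_0^1$-arguments are at most $3\|f\|_{B_0^1}$, I may replace both by the $t$-independent quantity $C(\|f\|_{B_0^1}):=\max\{c_1(\|f\|_{B_0^1},3\|f\|_{B_0^1}),\,c_2(3\|f\|_{B_0^1})\}$. Adding the two bounds and invoking the choice of decomposition yields
$$
K(t,\mathcal Af;B_0^2,B_1^2)\leq C(\|f\|_{B_0^1})\bigl(\|f_0(t)\|_{B_0^1}+t\|f_1(t)\|_{B_1^1}\bigr)\leq 2\,C(\|f\|_{B_0^1})\,K(t,f;B_0^1,B_1^1).
$$

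Finally I would insert this pointwise comparison of $K$-functionals into the integral defining the $(B_0^2,B_1^2)_{\theta,2}$-norm, extract the $t$-independent factor, and recognize the remaining integral as $\|f\|_{B_\theta^1}$, obtaining $\|\mathcal Af\|_{B_\theta^2}\leq 2\,C(\|f\|_{B_0^1})\,\|f\|_{B_\theta^1}$, so that $c(\cdot)=2\,C(\cdot)$ is nondecreasing as required. The main obstacle is precisely the uniform-in-$t$ control of $\|f_1(t)\|_{B_0^1}$ from the second step; once that is in hand, the nonlinear Lipschitz and boundedness constants become fixed multipliers depending only on $\|f\|_{B_0^1}$, and the remainder is the classical linear $K$-method computation.
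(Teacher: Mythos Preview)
Your argument is correct and is essentially the standard $K$-functional proof of Tartar's nonlinear interpolation theorem. Note that the paper itself does not prove this lemma; it is quoted verbatim as a result from \cite{Tar} and used as a black box, so there is no in-paper proof to compare against. Your approach---controlling $\|f_1(t)\|_{B_0^1}$ uniformly in $t$ via the trivial splitting and then telescoping through $\mathcal A f_1(t)$---is the argument Tartar gives, so you have reconstructed the original source.
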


For the decay results we need Steklov inequality in such a form: for $\psi\in H_0^1(0,L)$
\begin{equation}\label{1.6}
\int_0^L \psi^2(y)\,dy \leq \frac{L^2}{\pi^2} \int_0^L \bigl(\psi'(y)\bigr)^2\,dy.
\end{equation}

The paper is organized as follows. Auxiliary linear problems are considered in Section~\ref{S2}. Section~\ref{S3} is devoted to global well-posedness of the original problem. Decay of solutions is studied in Section~\ref{S4}.

\section{An auxiliary linear problem}\label{S2}

In an arbitrary layer $\Pi_T$ consider a linear equation 
\begin{equation}\label{2.1}
u_t+u_{xxx}+u_{xyy}-\delta(u_{xx}+u_{yy})=f(t,x,y)
\end{equation}
and set initial and boundary conditions \eqref{1.2}, \eqref{1.3}.

Introduce certain additional function spaces. Let $\EuScript S(\overline{\Sigma})$ be a space of infinitely smooth in $\overline{\Sigma}=\mathbb R\times [0,L]$ functions $\varphi(x,y)$ such that $\displaystyle{(1+|x|)^n|\partial^k_x\partial^l_y\varphi(x,y)|\leq c(n,k,l)}$ for any integer non-negative $n, k, l$ and all $(x,y)\in \overline{\Sigma}$.

\begin{lemma}\label{L2.1}
Let $u_0\in \EuScript S(\overline{\Sigma})$, $f\in C^\infty\bigl([0,T]; \EuScript S(\overline{\Sigma})\bigr)$ and for any integer $j\geq 0$
\begin{equation*}
\partial^{2j}_y u_0\big|_{y=0}=\partial^{2j}_y u_0\big|_{y=L}=0, \qquad 
\partial^{2j}_y f\big|_{y=0}=\partial^{2j}_y f\big|_{y=L}=0.
\end{equation*}
Then there exists a unique solution to problem \eqref{2.1}, \eqref{1.2}, \eqref{1.3} $u\in C^\infty\bigl([0,T]; \EuScript S(\overline{\Sigma})\bigr)$.
\end{lemma}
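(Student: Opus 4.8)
The plan is to construct the solution via separation of variables in the $y$-variable, reducing the problem to a countable family of one-dimensional linear initial value problems on $\mathbb R$ for which explicit Fourier analysis in $x$ is available. Since $u_0$ and $f$ vanish together with all even $y$-derivatives at $y=0$ and $y=L$, their sine-Fourier expansions $u_0=\sum_{n\geq 1} c_n(x)\sin(\pi n y/L)$ and $f=\sum_{n\geq 1} f_n(t,x)\sin(\pi n y/L)$ converge rapidly: the boundary conditions on even derivatives are exactly what is needed to integrate by parts repeatedly and show that $c_n$ and $f_n$, together with all their $x$-derivatives and $t$-derivatives, decay faster than any power of $n$, uniformly with weights $(1+|x|)^N$. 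Plugging the ansatz $u=\sum_{n\geq 1} v_n(t,x)\sin(\pi n y/L)$ into \eqref{2.1} decouples the equation into
\begin{equation*}
\partial_t v_n + \partial_x^3 v_n - \frac{\pi^2 n^2}{L^2}\partial_x v_n - \delta\,\partial_x^2 v_n + \frac{\delta \pi^2 n^2}{L^2} v_n = f_n,\qquad v_n(0,x)=c_n(x).
\end{equation*}

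Next I would solve each scalar problem by the Fourier transform in $x$. Writing $\widehat{v}_n(t,\xi)$ for the transform, the equation becomes a linear ODE in $t$ whose solution is
\begin{equation*}
\widehat{v}_n(t,\xi)=e^{-t\,p_n(\xi)}\widehat{c}_n(\xi)+\int_0^t e^{-(t-\tau)p_n(\xi)}\widehat{f}_n(\tau,\xi)\,d\tau,
\end{equation*}
where $p_n(\xi)=i\xi^3+i\tfrac{\pi^2 n^2}{L^2}\xi+\delta\xi^2+\delta\tfrac{\pi^2 n^2}{L^2}$ has real part $\delta(\xi^2+\pi^2n^2/L^2)>0$. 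This real part gives a genuine decay factor, so $|e^{-tp_n(\xi)}|\leq 1$ and in fact the semigroup is smoothing; combined with the Schwartz decay of $\widehat c_n$ and $\widehat f_n$ one checks that each $v_n$ lies in $C^\infty([0,T];\EuScript S(\mathbb R))$ and that the $\EuScript S(\mathbb R)$-seminorms of $v_n$, and of $\partial_t^k v_n$, decay faster than any power of $n$ (the factor $e^{-\delta t\pi^2 n^2/L^2}$ helps for $t>0$, but even at $t=0$ the rapid decay of the data in $n$ suffices since $|e^{-tp_n}|\le 1$). Summing the series then produces $u\in C^\infty([0,T];\EuScript S(\overline\Sigma))$ solving \eqref{2.1}, \eqref{1.2}; the boundary conditions \eqref{1.3} hold termwise because each $\sin(\pi n y/L)$ vanishes at $y=0,L$, and termwise differentiation in $y$ is justified by the rapid decay in $n$.

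For uniqueness, I would take the difference $w$ of two solutions in $C^\infty([0,T];\EuScript S(\overline\Sigma))$, which solves the homogeneous equation with zero data, multiply \eqref{2.1} (with $f=0$) by $2w$ and integrate over $\Sigma$. Using the boundary conditions \eqref{1.3} and the Schwartz decay in $x$ to kill all boundary terms, the dispersive terms $w_{xxx}+w_{xyy}$ integrate to zero and the parabolic terms contribute $2\delta\iint|Dw|^2\geq 0$, giving $\frac{d}{dt}\iint w^2\,dxdy\leq 0$ hence $w\equiv 0$. The main obstacle is not any single estimate but the bookkeeping: one must verify uniformly-in-$n$ Schwartz bounds on $v_n$ and all its $t$-derivatives so that the series and all its termwise $x$-, $y$-, $t$-derivatives converge in $\EuScript S(\overline\Sigma)$; the key quantitative input is that the symbol $p_n(\xi)$ has positive real part bounded below by $\delta\pi^2 n^2/L^2$ away from $\xi=0$ and that $\partial_\xi^j e^{-tp_n(\xi)}$ grows at most polynomially in $\xi$ and $n$ for fixed $t\geq 0$, which is absorbed by the rapid decay of the data.
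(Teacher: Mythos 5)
Your proposal is correct and follows essentially the same route as the paper: the paper's proof writes the solution as exactly this sine expansion $\psi_l(y)=\sqrt{2/L}\,\sin(\pi l y/L)$ in $y$ combined with the Fourier transform in $x$, with the Duhamel kernel $e^{(i(\xi^3+\xi\lambda_l)-\delta(\xi^2+\lambda_l))(t-\tau)}$, and declares membership in $C^\infty([0,T];\EuScript S(\overline\Sigma))$ obvious. You merely make explicit what the paper leaves implicit --- the rapid decay in $l$ coming from the even-derivative boundary conditions, and uniqueness via the elementary $L_2$ energy identity --- and your sign discrepancy in the imaginary part of $p_n(\xi)$ is only a Fourier-transform convention, so nothing is amiss.
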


\begin{proof}
For any natural $l$ let $\psi_l(y)\equiv\sqrt{\frac{2}{L}}\sin{\frac{\pi l}{L} y}$, $\lambda_l=\left(\frac{\pi l}L\right)^2$.   
Then a solution to the considered problem can be written as follows:
\begin{equation*}
u(t,x,y)=\frac{1}{2\pi}\int_{\mathbb R}\, \sum_{l=1}^{+\infty}e^{i\xi x}\psi_l(y)\widehat{u}(t,\xi,l)\,d\xi,
\end{equation*}
where
\begin{equation*}
\widehat{u}(t,\xi,l)\equiv \widehat{u_0}(\xi,l)e^{\left(i(\xi^3+\xi\lambda_l)
-\delta(\xi^2+\lambda_l)\right)t} +
\int_0^t\widehat{f}(\tau,\xi,l)e^{\left(i(\xi^3+\xi\lambda_l)-\delta(\xi^2+\lambda_l)\right)(t-\tau)}\,d\tau,
\end{equation*}
\begin{equation*}
\widehat{u_0}(\xi,l)\equiv\iint e^{-i\xi x}\psi_l(y)u_0(x,y)\,dxdy,\quad
\widehat f(t,\xi,l)\equiv\iint e^{-i\xi x}\psi_l(y) f(t,x,y)\,dxdy,
\end{equation*}
and, obviously, $u\in C^\infty([0,T],\EuScript S(\overline{\Sigma}))$.
\end{proof}

Next, consider generalized solutions. Let $u_0\in \EuScript S'(\overline{\Sigma})$, $f \in \bigl(C^\infty([0,T]; \EuScript S(\overline{\Sigma}))\bigr)'$.

\begin{definition}\label{D2.2}
A function $u \in \bigl(C^\infty([0,T]; \EuScript S(\overline{\Sigma}))\bigr)'$ is called a generalized solution to problem \eqref{2.1}, \eqref{1.2}, \eqref{1.3}, if for any function $\phi \in C^\infty\bigl([0,T]; \EuScript S(\overline{\Sigma})\bigr)$, such that $\phi|_{t=T}=0$ and $\phi|_{y=0}=\phi|_{y=L}=0$, the following equality holds:
\begin{equation}\label{2.3}
\langle u,\phi_t+\phi_{xxx}+\phi_{xyy}+\delta\phi_{xx}+\delta\phi_{yy}\rangle +
\langle f,\phi\rangle +\langle u_0,\phi|_{t=0}\rangle=0.
\end{equation}
\end{definition}

\begin{lemma}\label{L2.3}
A generalized solution to problem \eqref{2.1}, \eqref{1.2}, \eqref{1.3} is unique.
\end{lemma}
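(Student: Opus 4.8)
The plan is the standard duality/Holmgren-type argument: to show uniqueness it suffices to show that if $u_0=0$ and $f=0$, then the equality \eqref{2.3} forces $\langle u,\phi|_{t=0}\rangle$-free pairings $\langle u,g\rangle=0$ for a sufficiently rich class of test functions $g$, hence $u=0$ in $\bigl(C^\infty([0,T];\EuScript S(\overline\Sigma))\bigr)'$. Concretely, with $u_0=0$, $f=0$, \eqref{2.3} reads $\langle u,\phi_t+\phi_{xxx}+\phi_{xyy}+\delta\phi_{xx}+\delta\phi_{yy}\rangle=0$ for every admissible $\phi$ (smooth, rapidly decaying in $x$, vanishing at $t=T$, and with $\phi|_{y=0}=\phi|_{y=L}=0$). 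So I must prove that given an arbitrary $g\in C^\infty([0,T];\EuScript S(\overline\Sigma))$ with $g|_{y=0}=g|_{y=L}=0$, there exists an admissible test function $\phi$ solving the \emph{formally adjoint backward problem}
\begin{equation*}
\phi_t+\phi_{xxx}+\phi_{xyy}+\delta\phi_{xx}+\delta\phi_{yy}=g,\qquad \phi|_{t=T}=0,\qquad \phi|_{y=0}=\phi|_{y=L}=0,
\end{equation*}
with $\phi\in C^\infty([0,T];\EuScript S(\overline\Sigma))$. Then $\langle u,g\rangle=0$ for all such $g$, and since such $g$ are dense enough in $C^\infty([0,T];\EuScript S(\overline\Sigma))$ (one also needs to dispose of the boundary constraint on $g$, see below), this yields $u=0$.

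Solving the adjoint problem is exactly a problem of the type handled by Lemma \ref{L2.1}, after the time reversal $t\mapsto T-t$ and the spatial reflection $x\mapsto -x$: under $t'=T-t$, $x'=-x$ the operator $\partial_t+\partial_x^3+\partial_x\partial_y^2+\delta\partial_x^2+\delta\partial_y^2$ turns into $\partial_{t'}+\partial_{x'}^3+\partial_{x'}\partial_y^2-\delta(\partial_{x'}^2+\partial_y^2)$, which is precisely the operator in \eqref{2.1}, and the terminal condition becomes an initial condition. The $y$-boundary conditions $\phi|_{y=0}=\phi|_{y=L}=0$ are preserved. To invoke Lemma \ref{L2.1} I also need the compatibility conditions $\partial_y^{2j}\phi|_{y=0}=\partial_y^{2j}\phi|_{y=L}=0$ for all $j\ge 0$; these follow for the data of the adjoint problem provided $g$ additionally satisfies $\partial_y^{2j}g|_{y=0}=\partial_y^{2j}g|_{y=L}=0$ for all $j$, because on $y\in\{0,L\}$ the equation ties $\partial_y^{2}\phi$ (via $\partial_x\partial_y^2\phi$ and $\delta\partial_y^2\phi$, together with the vanishing of $\phi$ and its $x$- and $t$-derivatives there) to $g$ and lower even $y$-derivatives, so an induction in $j$ gives the full tower of compatibility conditions and Lemma \ref{L2.1} applies. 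Thus for every $g$ in the subclass $\{g\in C^\infty([0,T];\EuScript S(\overline\Sigma)):\partial_y^{2j}g|_{y=0}=\partial_y^{2j}g|_{y=L}=0\ \forall j\ge 0\}$ we get an admissible $\phi$ and hence $\langle u,g\rangle=0$.

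The remaining point is that this subclass of $g$'s is dense in $C^\infty([0,T];\EuScript S(\overline\Sigma))$ in the relevant topology, so that $\langle u,g\rangle=0$ extends to all test functions and $u=0$. This is routine: one approximates an arbitrary $g$ by its truncated sine-series partial sums $g_N(t,x,y)=\sum_{l=1}^N \psi_l(y)\,g_l(t,x)$ with $g_l(t,x)=\int_0^L\psi_l(y)g(t,x,y)\,dy$; each $g_N$ lies in the subclass (since every $\psi_l$ together with all its even $y$-derivatives vanishes at $y=0,L$), and $g_N\to g$ in $C^\infty([0,T];\EuScript S(\overline\Sigma))$ because $g$, being smooth in $y\in[0,L]$ with no boundary vanishing assumed, still has a rapidly (indeed faster than any polynomial in $l$) convergent sine expansion after integrating by parts in $y$ sufficiently many times, uniformly with all $x$-weights and all $t$- and $x$-derivatives. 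I expect the main obstacle to be precisely the bookkeeping in this last step — verifying the convergence $g_N\to g$ in the Fréchet topology of $C^\infty([0,T];\EuScript S(\overline\Sigma))$ uniformly in all the seminorms — together with the induction establishing the compatibility tower for $\phi$; the construction of $\phi$ itself is immediate from Lemma \ref{L2.1}.
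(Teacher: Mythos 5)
Your overall strategy is exactly the paper's: the printed proof consists of the single sentence ``standard H\"olmgren's argument on the basis of Lemma~\ref{L2.1}'', and your duality reduction --- solve the formally adjoint backward problem by time reversal $t\mapsto T-t$ and reflection $x\mapsto -x$, which turns the adjoint operator back into the operator of \eqref{2.1} (up to an overall sign absorbed into the right-hand side), and note that the sine-series solution of Lemma~\ref{L2.1} automatically satisfies $\phi|_{y=0}=\phi|_{y=L}=0$ and $\phi|_{t=T}=0$, hence is admissible in Definition~\ref{D2.2} --- is the correct implementation of that argument.

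However, the last step of your write-up contains a genuine error. You claim that the partial sine sums $g_N$ of an arbitrary $g\in C^\infty([0,T];\EuScript S(\overline{\Sigma}))$ converge to $g$ in the Fr\'echet topology of that space because integration by parts in $y$ yields rapidly decaying coefficients. This fails precisely when $g$ does not vanish at $y=0$ or $y=L$: integration by parts then produces non-vanishing boundary terms, the coefficients $\widehat g(t,\xi,l)$ decay only like $l^{-1}$, and the partial sums exhibit the Gibbs phenomenon near the boundary, so $g_N\not\to g$ even in the sup norm (indeed $g_N(t,x,0)=0$ for every $N$ while $g(t,x,0)$ may be $1$). Hence the subclass $\{g:\ \partial_y^{2j}g|_{y=0}=\partial_y^{2j}g|_{y=L}=0\ \forall j\}$ is \emph{not} dense in $C^\infty([0,T];\EuScript S(\overline{\Sigma}))$, and your argument only shows that $u$ annihilates that subclass; a priori $u$ could still be a nonzero distribution carried by the lateral boundary, e.g.\ of the form $\delta(y)\otimes h(t,x)$. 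The standard way to close this gap is to observe that testing against the subclass (which contains $C_0^\infty\bigl((0,T)\times\Sigma\bigr)$) already identifies $u$ as a distribution on the open set $(0,T)\times\Sigma$, which is all that is needed for the uniqueness assertions actually used in Lemmas~\ref{L2.4}--\ref{L2.7}, where the candidate solutions are functions in $C([0,T];L_2)$ and therefore vanish identically once they vanish as distributions on the interior. With that adjustment (or an additional argument ruling out boundary-supported parts for generalized solutions in the sense of Definition~\ref{D2.2}), the proof is complete; without it, the density claim as stated is false.
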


\begin{proof} 
The proof is implemented by standard H\"olmgren's argument on the basis of Lemma~\ref{L2.1}.
\end{proof}

\begin{lemma}\label{L2.4}
Let $u_0\in L_2$, $f\equiv f_0+f_{1x}+f_{2y}$, where $f_0\in L_1(0,T; L_2)$, $f_1,f_2\in L_2(\Pi_T)$. Then there exists a (unique) generalized solution to problem \eqref{2.1}, \eqref{1.2}, \eqref{1.3} $u\in X^0(\Pi_T)$. Moreover, for any $t\in (0,T]$ 
\begin{equation}\label{2.3}
\|u\|_{X^0(\Pi_t)}
\leq c(T,\delta) \left[\|u_0\|_{L_2}+\|f_0\|_{L_1(0,t;L_2)}
+\|f_1\|_{L_2(\Pi_t)}
+\|f_2\|_{L_2(\Pi_t)}\right]
\end{equation}
and
\begin{multline}\label{2.4}
\iint u^2(t,x,y)\,dxdy +2\delta \int_0^t\!\! \iint (u_x^2+u_y^2)\,dxdyd\tau = 
\iint u_0^2\,dxdy \\+
2\int_0^t\!\! \iint (f_0u-f_1u_x-f_2u_y)\,dxdyd\tau.
\end{multline}
\end{lemma}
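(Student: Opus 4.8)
The plan is to establish everything first for the smooth solutions supplied by Lemma~\ref{L2.1}, and then pass to general data by a density argument based on the linearity of the problem.

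\emph{Step 1: smooth data.} Let $u_0$ and $f$ satisfy the hypotheses of Lemma~\ref{L2.1}, so that the solution $u$ belongs to $C^\infty([0,T];\EuScript S(\overline{\Sigma}))$. Multiply \eqref{2.1} by $2u$ and integrate over $\Sigma$. Integration by parts in $x$ gives $\iint 2uu_{xxx}\,dxdy=0$, and integration by parts in $y$ gives $\iint 2uu_{xyy}\,dxdy=-\iint (u_y^2)_x\,dxdy=0$, the boundary terms at $y=0,L$ vanishing because of \eqref{1.3}; similarly $-\delta\iint 2u(u_{xx}+u_{yy})\,dxdy=2\delta\iint(u_x^2+u_y^2)\,dxdy$. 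Writing $f=f_0+f_{1x}+f_{2y}$ and integrating by parts once more, $\iint 2uf\,dxdy=2\iint(f_0u-f_1u_x-f_2u_y)\,dxdy$ (the boundary contribution of the $f_{2y}$ term vanishes since $u=0$ on $y=0,L$). Collecting terms and integrating in $t$ yields the energy equality \eqref{2.4}. Estimating its right-hand side by the Cauchy--Schwarz and Young inequalities — absorbing part of $2\delta\iint(u_x^2+u_y^2)$ into the left-hand side — and applying Gronwall's lemma to $\iint u^2(t,\cdot,\cdot)\,dxdy$ gives the bound \eqref{2.3} for these smooth solutions, with $c=c(T,\delta)$.

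\emph{Step 2: approximation.} For general $u_0\in L_2$, $f_0\in L_1(0,T;L_2)$, $f_1,f_2\in L_2(\Pi_T)$ choose $u_0^n$, $f_0^n$, $f_1^n$, $f_2^n$ meeting the assumptions of Lemma~\ref{L2.1} with $u_0^n\to u_0$ in $L_2$, $f_0^n\to f_0$ in $L_1(0,T;L_2)$, $f_j^n\to f_j$ in $L_2(\Pi_T)$. This is possible by truncating expansions in $y$ with smooth rapidly decreasing coefficients in $(t,x)$: in the basis $\{\psi_l\}$ for $u_0$, $f_0$, $f_1$, and in the cosine basis for $f_2$, so that all odd $y$-derivatives of $f_2^n$, hence $\partial_y^{2j}(f_{2y}^n)$, still vanish at $y=0,L$. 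Let $u^n\in C^\infty([0,T];\EuScript S(\overline{\Sigma}))$ be the corresponding solutions. Since the problem is linear, $u^n-u^m$ solves \eqref{2.1}, \eqref{1.2}, \eqref{1.3} with data $u_0^n-u_0^m$ and source of the same form; applying \eqref{2.3} to the difference shows $\{u^n\}$ is Cauchy in $X^0(\Pi_T)$, with limit $u\in X^0(\Pi_T)$, and moreover $u^n\to u$ in $C([0,T];L_2)$ and in $L_2(0,T;H^1)$.

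\emph{Step 3: conclusion.} Passing to the limit in the integral identity of Definition~\ref{D2.2} written for $u^n$ (every term is continuous in the relevant topology) shows that $u$ is a generalized solution of \eqref{2.1}, \eqref{1.2}, \eqref{1.3}; it is the only one by Lemma~\ref{L2.3}. The strong convergences from Step~2 let one pass to the limit in \eqref{2.4} and \eqref{2.3} written for $u^n$, so both relations hold for $u$, which completes the proof. The one point requiring a little care is the construction in Step~2 of approximating data that simultaneously respect the splitting $f=f_0+f_{1x}+f_{2y}$ and satisfy the compatibility conditions of Lemma~\ref{L2.1}; beyond that, the argument is the routine $L_2$ energy method.
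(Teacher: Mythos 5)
Your proposal is correct and follows essentially the same route as the paper: the paper's proof likewise reduces to the smooth solutions of Lemma~\ref{L2.1} by linearity, multiplies \eqref{2.1} by $2u$ and integrates to get the energy identity, and reads off \eqref{2.3} and \eqref{2.4}. You merely spell out the integration by parts, the Gronwall step, and the density/passage-to-the-limit argument that the paper leaves implicit.
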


\begin{proof}
It is sufficient to consider smooth solutions from Lemma~\ref{L2.1} because of linearity of the problem.

Multiplying \eqref{2.1} by $2u(t,x,y)$ and integrating over $\Sigma$ we obtain an equality
\begin{equation}\label{2.5}
\frac{d}{dt}\iint u^2 \,dxdy
+2\delta\iint(u_x^2+u_y^2) \,dxdy 
=2\iint (f_0 u-f_1u_x-f_2u_y)\,dxdy, 
\end{equation}
whence \eqref{2.3} and \eqref{2.4} are immediate.
\end{proof}

\begin{lemma}\label{L2.5}
Let $u_0\in H^1_0$, $f\equiv f_0+f_1$, where $f_0\in L_1(0,T;H^1_0)$, $f_1\in L_2(\Pi_T)$. Then there exists a (unique) generalized solution to problem \eqref{2.1}, \eqref{1.2}, \eqref{1.3} $u\in X^1(\Pi_T)$. Moreover, for any $t\in (0,T]$
\begin{equation}\label{2.6}
\|u\|_{X^1(\Pi_t)}
\leq c(T,\delta) \left[\|u_0\|_{H^1}+\|f_0\|_{L_1(0,t_0;H^1)}
+\|f_1\|_{L_2(\Pi_t)}\right]
\end{equation}
and
\begin{multline}\label{2.7}
\iint (u_x^2+u_y^2)\,dxdy +2\delta\int_0^t\!\! \iint (u_{xx}^2+2u^2_{xy}+u^2_{yy})\,dxdyd\tau =
\iint (u_{0x}^2+u_{0y}^2)\,dxdy  \\+
2\int_0^t\!\! \iint (f_{0x}u_x+f_{0y}u_y-f_1u_{xx}-f_1u_{yy})\,dxdyd\tau.
\end{multline}
\end{lemma}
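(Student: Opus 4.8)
The plan is to repeat the scheme of the proof of Lemma~\ref{L2.4}: by linearity of the problem it suffices to establish the bound \eqref{2.6} and the identity \eqref{2.7} for the smooth, rapidly decaying solutions furnished by Lemma~\ref{L2.1}, and then to obtain the general case by approximation, uniqueness being guaranteed by Lemma~\ref{L2.3}. For this one approximates $u_0$ in $H^1$, $f_0$ in $L_1(0,T;H^1)$ and $f_1$ in $L_2(\Pi_T)$ by functions from $C^\infty([0,T];\EuScript S(\overline{\Sigma}))$ satisfying the compatibility conditions of Lemma~\ref{L2.1}; this is possible because the functions $\psi_l$ (which form orthogonal bases of both $L_2(0,L)$ and $H^1_0(0,L)$) satisfy $\partial^{2j}_y\psi_l|_{y=0}=\partial^{2j}_y\psi_l|_{y=L}=0$ for all $j$, so that finite sums $\sum_l\varphi_l\psi_l$ with $\varphi_l$ smooth and rapidly decaying in $x$ (and smooth in $t$ for the data $f_0,f_1$) are dense in the required spaces and meet those conditions.

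For a smooth solution, multiply \eqref{2.1} by $-2(u_{xx}+u_{yy})$ and integrate over $\Sigma$. Integration by parts turns the $u_t$-term into $\frac{d}{dt}\iint(u_x^2+u_y^2)\,dxdy$, the boundary terms at $y=0,L$ vanishing since $u|_{y=0,L}=0$ forces $u_t|_{y=0,L}=0$. As $u_{xxx}+u_{xyy}=\partial_x(u_{xx}+u_{yy})$, the dispersive part equals $-\iint\partial_x\bigl[(u_{xx}+u_{yy})^2\bigr]\,dxdy=0$. The dissipative part equals $2\delta\iint(u_{xx}+u_{yy})^2\,dxdy$, and the identity $\iint u_{xx}u_{yy}\,dxdy=\iint u_{xy}^2\,dxdy$ (integration by parts first in $x$, then in $y$, again using $u_x|_{y=0,L}=0$) rewrites it as $2\delta\iint(u_{xx}^2+2u_{xy}^2+u_{yy}^2)\,dxdy$. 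On the right-hand side, $-2\iint(u_{xx}+u_{yy})f_0=2\iint(f_{0x}u_x+f_{0y}u_y)$, the boundary terms disappearing because $f_0|_{y=0,L}=0$, whereas the contribution of $f_1$ is left as $-2\iint(f_1u_{xx}+f_1u_{yy})$. Integrating in $t$ over $(0,t)$ yields \eqref{2.7}.

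To deduce \eqref{2.6}, note that $f_0\in L_1(0,T;H^1)\subset L_1(0,T;L_2)$ and, by the Cauchy--Schwarz inequality on the bounded interval, $f_1\in L_2(\Pi_T)\subset L_1(0,T;L_2)$; hence Lemma~\ref{L2.4}, applied with the datum $f_0+f_1$ in the role of its $f_0$ and with its other two data equal to zero, already bounds $\|u\|_{X^0(\Pi_t)}$ by the right-hand side of \eqref{2.6}. It remains to estimate $M(t):=\sup_{[0,t]}\bigl\||Du|\bigr\|_{L_2}$ and $\int_0^t\!\iint(u_{xx}^2+2u_{xy}^2+u_{yy}^2)\,dxdyd\tau$ from \eqref{2.7}: the $f_1$-term is absorbed into the dissipative one through $2\bigl|\iint f_1(u_{xx}+u_{yy})\bigr|\le\delta\iint(u_{xx}+u_{yy})^2+\delta^{-1}\iint f_1^2$, and the $f_0$-term is controlled by $2M(t)\|f_0\|_{L_1(0,t;H^1)}\le\frac12 M(t)^2+2\|f_0\|_{L_1(0,t;H^1)}^2$, which gives the closed inequality
\[
\tfrac12 M(t)^2+\delta\int_0^t\!\!\iint(u_{xx}+u_{yy})^2\,dxdyd\tau\le\|u_0\|_{H^1}^2+2\|f_0\|_{L_1(0,t;H^1)}^2+\delta^{-1}\|f_1\|_{L_2(\Pi_t)}^2,
\]
and hence \eqref{2.6} upon combining with the bound from Lemma~\ref{L2.4}. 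Continuity $u\in C([0,T];H^1)$ and the validity of \eqref{2.7} in the limit are obtained by the same standard arguments as in Lemma~\ref{L2.4}.

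The only genuine difficulty is bookkeeping: one must verify that every boundary integral at $y=0,L$ generated by testing \eqref{2.1} against $-2(u_{xx}+u_{yy})$ vanishes. This rests on the implications $u|_{y=0,L}=0\Rightarrow u_t|_{y=0,L}=u_x|_{y=0,L}=0$ and on the hypothesis $f_0\in H^1_0$ (so $f_0|_{y=0,L}=0$) --- this is precisely why $f_0$ is required to lie in $H^1_0$ rather than merely in $H^1$ --- together with the identity $\iint u_{xx}u_{yy}\,dxdy=\iint u_{xy}^2\,dxdy$; for the solutions of Lemma~\ref{L2.1} all of these manipulations are legitimate, and the general case follows by the density argument described above.
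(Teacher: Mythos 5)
Your proposal is correct and follows essentially the same route as the paper: multiplying \eqref{2.1} by $-2(u_{xx}+u_{yy})$, integrating over $\Sigma$ to obtain the differential identity \eqref{2.8} for the smooth solutions of Lemma~\ref{L2.1}, and then deducing \eqref{2.6} and \eqref{2.7} by absorption and a density/closure argument. You merely spell out the details the paper leaves implicit (the vanishing of the boundary terms at $y=0,L$, the identity $\iint u_{xx}u_{yy}\,dxdy=\iint u_{xy}^2\,dxdy$, and the approximation of the data by functions satisfying the compatibility conditions of Lemma~\ref{L2.1}), all of which are accurate.
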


\begin{proof}
In the smooth case multiplying \eqref{2.1} by $-2\bigl(u_{xx}(t,x,y)+u_{yy}(t,x,y)\bigr)$ and integrating over $\Sigma$ one obtains an equality
\begin{multline}\label{2.8}
\frac{d}{dt}\iint(u_x^2+u_y^2) \,dxdy
+2\delta\iint(u^2_{xx}+2u^2_{xy}+u^2_{yy}) \,dxdy \\
=2\iint(f_{0\,x}u_x+f_{0\,y}u_y) \,dxdy 
-2\iint f_1(u_{xx}+u_{yy})\,dxdy,
\end{multline}
whence \eqref{2.6} and \eqref{2.7} follows.
\end{proof}

\begin{lemma}\label{L2.6}
Let the hypothesis of Lemma~\ref{L2.5} be satisfied. Then for the solution to problem \eqref{2.1}, \eqref{1.2}, \eqref{1.3} $u\in X^1(\Pi_T)$ for any $t\in (0,T]$
\begin{multline}\label{2.9}
-\frac 13 \iint u^3(t,x,y)\,dxdy
+2\int_0^t\!\! \iint uu_x(u_{xx}+u_{yy})\,dxdyd\tau \\
+\delta \int_0^t\!\! \iint u^2(u_{xx}+u_{yy})\,dxdyd\tau =
-\frac 13 \iint u_0^3\,dxdy - \int_0^t\!\! \iint fu^2\,dxdyd\tau.
\end{multline}
\end{lemma}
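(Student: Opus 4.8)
The plan is to obtain \eqref{2.9} as the energy identity produced by multiplying \eqref{2.1} by $-u^2$, first for the smooth solutions of Lemma~\ref{L2.1} and then for an arbitrary $u\in X^1(\Pi_T)$ by a density argument. In the smooth case, take $u_0\in\EuScript S(\overline\Sigma)$ and $f=f_0+f_1$ with $f_0,f_1\in C^\infty\bigl([0,T];\EuScript S(\overline\Sigma)\bigr)$ subject to the compatibility conditions of Lemma~\ref{L2.1}, so that the solution $u$ lies in $C^\infty\bigl([0,T];\EuScript S(\overline\Sigma)\bigr)$. Multiply \eqref{2.1} by $-u^2(t,x,y)$ and integrate over $\Sigma$. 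The $u_t$-term gives $-\frac13\frac{d}{dt}\iint u^3\,dxdy$; writing $u_{xxx}+u_{xyy}=\partial_x(u_{xx}+u_{yy})$ and integrating by parts once in $x$ (there are no boundary contributions because of the rapid decay as $|x|\to\infty$) the dispersive terms produce $2\iint uu_x(u_{xx}+u_{yy})\,dxdy$; the parabolic terms contribute $\delta\iint u^2(u_{xx}+u_{yy})\,dxdy$, and the right-hand side yields $-\iint fu^2\,dxdy$. Integrating the resulting identity over $(0,t)$ gives \eqref{2.9} in the smooth case.

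Before the limit passage I would check that every term of \eqref{2.9} is finite for $u\in X^1(\Pi_T)$. In dimension two $H^1\hookrightarrow L_p$ for all finite $p$, so $u\in C([0,T];H^1)$ makes $\iint u^3(t)\,dxdy$ and $\iint u_0^3\,dxdy$ finite and $u^2\in C([0,T];L_2)$. By Lemma~\ref{L1.2} with $m=0$, $k=2$, $q=\infty$, $\|u\|_{L_\infty}\le c\bigl\||D^2u|\bigr\|_{L_2}^{1/2}\|u\|_{L_2}^{1/2}+c\|u\|_{L_2}$; combined with the fact that $\|u(\cdot)\|_{H^2}\in L_2(0,T)$, which is built into $X^1$, this gives the bilinear estimate $\|uu_x\|_{L_2(\Pi_T)}<\infty$ with a bound depending only on $\|u\|_{X^1(\Pi_T)}$ and $T$. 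Since also $u_{xx},u_{yy}\in L_2(\Pi_T)$, the integrands $uu_x(u_{xx}+u_{yy})$, $u^2(u_{xx}+u_{yy})$, and $fu^2=(f_0+f_1)u^2$ all belong to $L_1(\Pi_T)$.

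For the density argument I would choose $u_{0,n}\in C_0^\infty(\Sigma)$ with $u_{0,n}\to u_0$ in $H^1$, and $f_{0,n},f_{1,n}\in C^\infty\bigl([0,T];\EuScript S(\overline\Sigma)\bigr)$ vanishing near $y=0$ and $y=L$ (hence satisfying the compatibility conditions of Lemma~\ref{L2.1}) with $f_{0,n}\to f_0$ in $L_1(0,T;H^1)$ and $f_{1,n}\to f_1$ in $L_2(\Pi_T)$; such approximations exist since $C_0^\infty(\Sigma)$ is dense in $H_0^1$ and in $L_2$. By Lemma~\ref{L2.1} the problem \eqref{2.1}, \eqref{1.2}, \eqref{1.3} with data $u_{0,n}$ and forcing $f_n=f_{0,n}+f_{1,n}$ has a smooth solution $u_n$, for which \eqref{2.9} is already established, while by linearity together with estimate \eqref{2.6} of Lemma~\ref{L2.5} one has $u_n\to u$ in $X^1(\Pi_T)$. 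Then I would let $n\to\infty$ in \eqref{2.9} written for $u_n$: the cubic terms converge uniformly in $t$ via $a^3-b^3=(a-b)(a^2+ab+b^2)$ and $u_n\to u$ in $C([0,T];L_3)$; writing $u_nu_{n,x}-uu_x=(u_n-u)u_{n,x}+u(u_{n,x}-u_x)$ and using the $L_\infty$-bound above gives $u_nu_{n,x}\to uu_x$ in $L_2(\Pi_T)$, which with $u_{n,xx}+u_{n,yy}\to u_{xx}+u_{yy}$ in $L_2(\Pi_T)$ disposes of the trilinear term; finally $u_n^2\to u^2$ in $C([0,T];L_2)$ together with $f_n\to f$ in $L_1(0,T;L_2)+L_2(\Pi_T)$ handles the $\delta$-term and the forcing term. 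This yields \eqref{2.9} for the solution furnished by Lemma~\ref{L2.5}.

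The one point that is not entirely routine is the bilinear estimate $\|uu_x\|_{L_2(\Pi_T)}\le c(\|u\|_{X^1(\Pi_T)},T)$: it is what makes the trilinear integral in \eqref{2.9} absolutely convergent and what drives the limit passage, and it rests on the borderline two-dimensional embedding $H^2\hookrightarrow L_\infty$ (equivalently Lemma~\ref{L1.2} at $q=\infty$) together with the membership $\|u(\cdot)\|_{H^2}\in L_2(0,T)$. Everything else is a standard energy computation and approximation.
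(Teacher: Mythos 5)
Your argument is correct and follows the paper's own route exactly: derive \eqref{2.9} by multiplying \eqref{2.1} by $-u^2$ in the smooth case, then pass to the limit by density using the well-posedness estimate of Lemma~\ref{L2.5}. The paper justifies the closure via the embeddings $u\in C([0,T];L_p)$, $u_x,u_y\in L_2(0,T;L_p)$ for finite $p$ (its \eqref{2.10}) rather than your endpoint bound $\|u\|_{L_\infty}\lesssim \bigl\||D^2u|\bigr\|_{L_2}^{1/2}\|u\|_{L_2}^{1/2}+\|u\|_{L_2}$, but this is an inessential variant of the same interpolation Lemma~\ref{L1.2}.
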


\begin{proof}
In the smooth case multiplying \eqref{2.1} by $-u^2(t,x,y)$ and integrating one instantly obtains equality \eqref{2.9}.

In the general case we obtain this equality via closure. Note that by virtue of \eqref{1.5} if $u\in X^1(\Pi_T)$ then
\begin{equation}\label{2.10}
u\in C([0,T];L_p),\ u_x,u_y\in L_2(0,T;L_p)\quad \text{for any}\ p\in [2,+\infty)
\end{equation}
and this passage to the limit is easily justified.
\end{proof}

\begin{lemma}\label{L2.7}
Let $u_0\in H^2\cap H_0^1$, $f\in L_1(0,T;H^2\cap H_0^1)$. Then there exists a (unique) generalized solution to problem \eqref{2.1}, \eqref{1.2}, \eqref{1.3} $u\in X^2(\Pi_T)$. Moreover, for any $t\in (0,T]$
\begin{equation}\label{2.11}
\|u\|_{X^2(\Pi_t)}
\leq c(T,\delta) \left[\|u_0\|_{H^2}+\|f\|_{L_1(0,t_0;H^2)}\right]
\end{equation}
and
\begin{multline}\label{2.12}
\iint (u_{xx}^2+u^2_{xy}+u_{yy}^2)\,dxdy +
2\delta\int_0^t\!\! \iint (u_{xxx}^2+2u^2_{xxy}+2u^2_{xyy}+u^2_{yyy})\,dxdyd\tau \\=
\iint (u_{0xx}^2+u_{0xy}^2+u_{0yy}^2)\,dxdy  +
2\int_0^t\!\! \iint (f_{xx}u_{xx}+f_{xy}u_{xy}+f_{yy}u_{yy})\,dxdyd\tau.
\end{multline}
\end{lemma}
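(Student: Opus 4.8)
The plan is to follow the pattern of Lemmas~\ref{L2.4} and~\ref{L2.5}: one first derives the a priori identity \eqref{2.12} for the smooth solutions furnished by Lemma~\ref{L2.1}, then extracts \eqref{2.11} from \eqref{2.12} together with the bounds already obtained in Lemmas~\ref{L2.4} and~\ref{L2.5}, and finally treats arbitrary data by approximation. Uniqueness is supplied by Lemma~\ref{L2.3}.

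The heart of the matter is the following structural fact. A solution $u\in C^\infty\bigl([0,T];\EuScript S(\overline\Sigma)\bigr)$ produced by Lemma~\ref{L2.1} is a superposition of the Dirichlet modes $\psi_l(y)=\sqrt{2/L}\,\sin(\pi ly/L)$; since $\psi_l''=-\lambda_l\psi_l$ vanishes at $y=0$ and $y=L$, the function $u_{yy}$ — and hence also $u_{xyy}$, $u_{xxyy}$ and $u_{tyy}$ — vanishes on $y=0$ and $y=L$ for every $t$. Consequently $v=u_x$ vanishes on $y=0,L$ and solves an equation of the form \eqref{2.1} with right-hand side $f_x$, so that the computation which yielded \eqref{2.8}, now carried out for $v$, gives
\begin{multline*}
\frac{d}{dt}\iint(u_{xx}^2+u_{xy}^2)\,dxdy+2\delta\iint(u_{xxx}^2+2u_{xxy}^2+u_{xyy}^2)\,dxdy\\
=2\iint(f_{xx}u_{xx}+f_{xy}u_{xy})\,dxdy;
\end{multline*}
likewise $z=u_{yy}$ vanishes on $y=0,L$ and solves an equation of the form \eqref{2.1} with right-hand side $f_{yy}$, so that the computation which yielded \eqref{2.5}, now carried out for $z$, gives
\begin{equation*}
\frac{d}{dt}\iint u_{yy}^2\,dxdy+2\delta\iint(u_{xyy}^2+u_{yyy}^2)\,dxdy=2\iint f_{yy}u_{yy}\,dxdy.
\end{equation*}
Adding these two identities and integrating over $(0,t)$ produces exactly \eqref{2.12}. (Equivalently, one multiplies \eqref{2.1} by $2(u_{xxxx}+u_{xxyy}+u_{yyyy})$ and integrates over $\Sigma$; the vanishing of $u_{yy},u_{xyy},u_{xxyy},u_{tyy}$ on $y=0,L$ annihilates every boundary term produced by the third-order and the dissipative terms.)

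To obtain \eqref{2.11} one estimates the last integral in \eqref{2.12} by $2\int_0^t\|f\|_{H^2}\bigl\||D^2u|\bigr\|_{L_2}\,d\tau\le\frac12\sup_{[0,t]}\bigl\||D^2u|\bigr\|_{L_2}^2+c\|f\|_{L_1(0,t;H^2)}^2$, absorbs the first summand into the supremum over $(0,t)$ of the left-hand side of \eqref{2.12}, and thereby controls $\sup_{[0,t]}\bigl\||D^2u|\bigr\|_{L_2}^2+2\delta\int_0^t\bigl\||D^3u|\bigr\|_{L_2}^2\,d\tau$ in terms of $\|u_0\|_{H^2}^2+\|f\|_{L_1(0,t;H^2)}^2$; combined with the estimates of Lemmas~\ref{L2.4} and~\ref{L2.5} (available since $f\in L_1(0,T;H^2\cap H^1_0)\subset L_1(0,T;H^1_0)$) this bounds $\|u\|_{X^2(\Pi_t)}$ and yields \eqref{2.11}. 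For arbitrary $u_0\in H^2\cap H^1_0$ and $f\in L_1(0,T;H^2\cap H^1_0)$ one approximates them in $H^2$, respectively in $L_1(0,T;H^2)$, by data satisfying the hypotheses of Lemma~\ref{L2.1} — e.g. by finite sums $\sum_{l\le N}c_l(x)\psi_l(y)$ with $c_l\in\EuScript S(\mathbb R)$, which are dense in $H^2\cap H^1_0$ and have all even $y$-derivatives vanishing on $y=0,L$; by \eqref{2.11} and linearity the corresponding smooth solutions form a Cauchy sequence in $X^2(\Pi_T)$ whose limit is the desired generalized solution, and \eqref{2.11}, \eqref{2.12} pass to the limit.

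The step demanding the most attention is the vanishing of the boundary contributions in the energy identity: it is exactly the compatibility conditions $\partial_y^{2j}u_0=\partial_y^{2j}f=0$ on $y=0,L$ built into Lemma~\ref{L2.1} that force $u_{yy}$ (and thus $u_{xyy}$, $u_{xxyy}$) to vanish on the lateral boundary and so suppress the otherwise uncontrolled boundary terms generated by the third-order term $u_{xyy}$ of the equation; one must also check that this approximation of $u_0$ and $f$ in $H^2$ by functions meeting those compatibility conditions is indeed possible.
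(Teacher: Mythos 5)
Your proposal is correct and follows essentially the same route as the paper: the paper obtains \eqref{2.13} by multiplying \eqref{2.1} by $2(u_{xxxx}+u_{xxyy}+u_{yyyy})$ in the smooth case and then passes to general data by approximation, which is exactly the computation you describe (your splitting into the energy identities for $u_x$ and $u_{yy}$ is just a reorganization of that single multiplier, as you note yourself). Your added detail on why the compatibility conditions of Lemma~\ref{L2.1} kill the lateral boundary terms, and on the density of finite sums $\sum_{l\le N}c_l(x)\psi_l(y)$ in $H^2\cap H^1_0$, fills in steps the paper leaves implicit.
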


\begin{proof}
In the smooth case multiplying \eqref{2.1} by $2\bigl(u_{xxxx}(t,x,y)+u_{xxyy}(t,x,y)+u_{yyyy}(t,x,y)\bigr)$ and integrating over $\Sigma$ one obtains an equality
\begin{multline}\label{2.13}
\frac{d}{dt}\iint(u_{xx}^2+u^2_{xy}+u_{yy}^2) \,dxdy
+2\delta\iint(u^2_{xxx}+2u^2_{xxy}+2u^2_{xyy}+u^2_{yyy}) \,dxdy \\
=2\iint(f_{xx}u_{xx}+f_{xy}u_{xy}+f_{yy}u_{yy}) \,dxdy,
\end{multline}
whence \eqref{2.11} and \eqref{2.12} follows. Note also that the functions $u_0$ and $f$ can be approximated by corresponding functions satisfying the hypothesis of Lemma~\ref{L2.1}.
\end{proof}

\section{Global well-posedness}\label{S3}

\begin{definition}\label{D3.1}
Let $u_0\in L_2$.
A function $u\in L_\infty(0,T;L_2)\cap L_2(0,T;H^1)$ is called a weak solution to problem \eqref{1.1}--\eqref{1.3} in a layer $\Pi_T$ for some $T>0$ if for any function $\phi\in L_2(0,T;H^3\cap H_0^1)$, such that $\phi_t\in L_2(\Pi_T)$ and 
$\phi|_{t=T} =0$, the following equality holds:
\begin{multline}\label{3.1}
\iiint_{\Pi_T}\bigl[u(\phi_t+\phi_{xxx}+\phi_{xyy}) 
+\frac12 u^2\phi_{x}-\delta u_x\phi_x-\delta u_y\phi_y\bigr]\,dxdyd\tau \\
+\iint_\Sigma u_0\phi\big|_{t=0}\,dxdy=0.
\end{multline}
If $u$ is a weak solution to this problem in $\Pi_T$ for any $T>0$ it is called a weak solution to problem \eqref{1.1}--\eqref{1.3} in the layer $\Pi$.
\end{definition}

\begin{remark}\label{R3.2}
By virtue of \eqref{1.5} for any function $u\in L_\infty(0,T;L_2)\cap L_2(0,T;H^1)$
\begin{multline}\label{3.2}
\|u^2\|_{L_2(\Pi_T)} \leq
c\Bigl[\int_0^T\Bigl( \iint \bigl(|Du|^2+u^2\bigr)\,dxdy \iint u^2\, dxdy\Bigr) \,dt\Bigr]^{1/2} \\ \leq 
\|u\|_{L_2(0,T;H^1)}\|u\|_{L_\infty(0,T;L_2)} <\infty,
\end{multline}
therefore $u^2\phi_x\in L_1(\Pi_T)$.
\end{remark}

\begin{theorem}\label{T3.3}
Let $u_0\in L_2$. Then problem \eqref{1.1}--\eqref{1.3} has a unique weak solution $u$ in $\Pi$ such that $u\in X^0(\Pi_T)$ for any $T>0$. The mapping $u_0\mapsto u$ is Lipschitz continuous on any ball in the norm of the mapping from $L_2$ into $X^0(\Pi_T)$. Moreover, the function $\|u(t,\cdot,\cdot)\|^2_{L_2}$ is absolutely continuous for $t\geq 0$ and
\begin{equation}\label{3.3}
\frac d{dt} \iint u^2(t,x,y)\,dxdy +2\delta \iint \bigl(u_x^2(t,x,y)+u_y^2(t,x,y)\bigr)\,dxdy =0\quad \text{for a.e.}\ t>0.
\end{equation}
\end{theorem}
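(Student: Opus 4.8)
The natural strategy is the Galerkin–type construction via the auxiliary linear problem of Section~\ref{S2}, combined with a contraction/compactness argument and the a~priori estimate \eqref{3.3}. First I would establish existence on a small time interval by a fixed point argument: given $v\in X^0(\Pi_T)$, use Lemma~\ref{L2.4} with $f=-vv_x=-\tfrac12(v^2)_x$ (so $f_0=0$, $f_1=-\tfrac12 v^2$, $f_2=0$), which lies in $L_2(\Pi_T)$ by the estimate \eqref{3.2} of Remark~\ref{R3.2}, to define $u=\Lambda v$ as the unique generalized solution of the linear problem. The estimate \eqref{2.3} gives $\|\Lambda v\|_{X^0(\Pi_T)}\le c(T,\delta)\bigl[\|u_0\|_{L_2}+\|v^2\|_{L_2(\Pi_T)}\bigr]$, and since $\|v^2\|_{L_2(\Pi_T)}\le \|v\|_{L_2(0,T;H^1)}\|v\|_{L_\infty(0,T;L_2)}\le \|v\|_{X^0(\Pi_T)}^2$, one checks that $\Lambda$ maps a ball of $X^0(\Pi_T)$ into itself for $T$ small depending only on $\|u_0\|_{L_2}$. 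For the contraction, with $u^{(i)}=\Lambda v^{(i)}$ the difference solves the linear problem with right-hand side $-\tfrac12\bigl((v^{(1)})^2-(v^{(2)})^2\bigr)_x=-\tfrac12\bigl((v^{(1)}+v^{(2)})(v^{(1)}-v^{(2)})\bigr)_x$, and the bilinear version of \eqref{3.2} yields a factor $cT^{\kappa}$ for some $\kappa>0$ (from the $L_2(0,T;\cdot)$ norms), so $\Lambda$ is a contraction for $T$ small. This gives a unique local weak solution in $X^0(\Pi_T)$; that it satisfies Definition~\ref{D3.1} follows from Definition~\ref{D2.2} applied to the linear problem (test functions $\phi\in L_2(0,T;H^3\cap H^1_0)$ with $\phi_t\in L_2(\Pi_T)$ are admissible after a density argument).

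Next I would derive the energy identity \eqref{3.3} and use it to globalize. Applying \eqref{2.4} from Lemma~\ref{L2.4} to the solution viewed as the linear solution with $f_1=-\tfrac12 u^2$, $f_0=f_2=0$, the nonlinear contribution to the right-hand side is $-2\int_0^t\!\!\iint f_1 u_x = \int_0^t\!\!\iint u^2 u_x\,dxdyd\tau = \tfrac13\int_0^t\!\!\iint (u^3)_x\,dxdyd\tau = 0$, since $u(\tau,\cdot,\cdot)\in H^1_0$ for a.e.\ $\tau$ and decays as $|x|\to\infty$ in the sense encoded by $X^0$; note the integrability $u^3\in L_1$ is guaranteed by \eqref{2.10}. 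Hence \eqref{2.4} reduces to $\iint u^2(t)\,dxdy+2\delta\int_0^t\!\!\iint(u_x^2+u_y^2)=\iint u_0^2$, which shows $t\mapsto \|u(t)\|_{L_2}^2$ is absolutely continuous and gives \eqref{3.3} in differential form a.e. In particular $\|u(t)\|_{L_2}\le\|u_0\|_{L_2}$ for all $t$ in the existence interval, so the local existence time $T$ depends only on $\|u_0\|_{L_2}$ and the solution extends to all of $\Pi$ by the usual continuation argument, remaining in $X^0(\Pi_T)$ for every $T>0$.

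Finally, uniqueness and Lipschitz dependence: given two weak solutions $u^{(1)},u^{(2)}$ in $X^0(\Pi_T)$ with data $u_0^{(1)},u_0^{(2)}$, their difference $w=u^{(1)}-u^{(2)}$ is a generalized solution of the linear problem with $w_0=u_0^{(1)}-u_0^{(2)}$ and $f=-\tfrac12\bigl((u^{(1)}+u^{(2)})w\bigr)_x$, so $f_1=-\tfrac12(u^{(1)}+u^{(2)})w\in L_2(\Pi_T)$ with $\|f_1\|_{L_2(\Pi_T)}\le c\,(\|u^{(1)}\|_{X^0}+\|u^{(2)}\|_{X^0})\,\|w\|_{X^0(\Pi_T)}$ again by the bilinear form of \eqref{3.2}. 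Estimate \eqref{2.3} then gives $\|w\|_{X^0(\Pi_t)}\le c(T,\delta)\bigl[\|w_0\|_{L_2}+cR\,t^{\kappa}\|w\|_{X^0(\Pi_t)}\bigr]$ on balls of radius $R$; absorbing the last term for $t$ small and iterating over a partition of $[0,T]$ yields $\|w\|_{X^0(\Pi_T)}\le c(T,\delta,R)\|w_0\|_{L_2}$, which is both uniqueness (when $w_0=0$) and the claimed Lipschitz continuity.

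The main obstacle is purely technical: making the fixed-point radius and time depend only on $\|u_0\|_{L_2}$ (not on higher norms) requires the sharp form of the bilinear estimate $\|v^{(1)}v^{(2)}\|_{L_2(\Pi_T)}\le cT^{\kappa}\prod_i\|v^{(i)}\|_{X^0(\Pi_T)}$ with a genuinely positive power of $T$, which comes from interpolating the $L_2(0,T;L_4)$ norms via Lemma~\ref{L1.2} and Hölder in time; and justifying the vanishing of the boundary/decay terms in the energy identity in the low-regularity $X^0$ class, handled by approximation as in Lemma~\ref{L2.6} using \eqref{2.10}.
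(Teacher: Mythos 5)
Your derivation of the energy identity \eqref{3.3} and the globalization from the resulting bound $\|u(t)\|_{L_2}\le\|u_0\|_{L_2}$ match the paper. The genuine gap is in the existence step: the claim that ``the bilinear version of \eqref{3.2} yields a factor $cT^{\kappa}$ for some $\kappa>0$'' is false, and with it both the self-mapping and the contraction property of $\Lambda$ on a ball of $X^0(\Pi_{t_0})$. Indeed, Lemma~\ref{L1.2} with $q=4$ gives $\|v\widetilde v\|_{L_2(\Sigma)}^2\le c\,\|v\|_{H^1}\|v\|_{L_2}\|\widetilde v\|_{H^1}\|\widetilde v\|_{L_2}+\dots$, so after integrating in time the $H^1$ factors carry total exponent exactly $2$ and H\"older in time leaves no positive power of $t_0$; this is precisely estimate \eqref{3.2}, whose right-hand side $\|v\|_{L_2(0,t_0;H^1)}\|v\|_{L_\infty(0,t_0;L_2)}$ does not shrink as $t_0\to0$ uniformly over a ball, because the $L_2(0,t_0;H^1)$ mass of a competitor $v$ may concentrate near $t=0$. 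The problem is scaling-critical at the $L_2$ level, and the naive fixed point does not close. The paper avoids this by truncating the nonlinearity: with $g_h$ as in \eqref{3.5} one has $\|g_h(v)-g_h(\widetilde v)\|_{L_2(\Pi_{t_0})}\le 2h^{-1}t_0^{1/2}\|v-\widetilde v\|_{C([0,t_0];L_2)}$, which does gain $t_0^{1/2}$ (at the price of an $h$-dependent constant); the contraction then gives a global solution $u_h$ for each $h$, the energy identity \eqref{3.7} gives the $h$-uniform bounds \eqref{3.8}--\eqref{3.10}, and the weak solution is obtained by compactness as $h\to0$. If you want to keep a direct fixed point you would have to modify the scheme (e.g.\ iterate on the difference from the linear evolution, whose $L_2(0,t_0;H^1)$ norm does vanish as $t_0\to0$ for fixed $u_0$), which you do not do.

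The uniqueness/Lipschitz step suffers from the same spurious ``$cR\,t^{\kappa}$'' smallness, but there it is repairable because $u^{(1)},u^{(2)}$ are fixed functions: one can partition $[0,T]$ finely enough that $\|u^{(i)}\|_{L_2(t_j,t_{j+1};H^1)}$ is small by absolute continuity of the integral, or, as the paper does, apply \eqref{2.4} to the difference, estimate $\iint|uvv_x|$ via Lemma~\ref{L1.2} so that $\iint|Dv|^2$ enters with exponent $3/4$, absorb it with an arbitrarily small $\varepsilon$, and conclude by Gronwall with the time-integrable weight $\iint(|Du|^2+u^2)\,dxdy$ furnished by \eqref{3.11}. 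You should adopt one of these two fixes and replace the existence argument by the truncation-and-compactness scheme.
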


\begin{proof}
Consider first an auxiliary initial-boundary value problem in $\Pi$ with initial and boundary conditions \eqref{1.2}, \eqref{1.3} for an equation
\begin{equation}\label{3.4}
u_t+u_{xxx}+u_{xyy}-\delta(u_{xx}+u_{yy}) +(g_h(u))_x=0,
\end{equation}
where for $h\in (0,1]$
\begin{equation}\label{3.5}
g_h(u)\equiv \int_0^u \Bigl[\theta \eta(2-h|\theta|)+ \frac{2\sgn\theta}h \eta(h|\theta|-1)\Bigr]\,d\theta.
\end{equation}
Note that $g_h(u)=u^2/2$ if $|u|\leq 1/h$, $|g_h'(u)|\leq 2/h$ $\forall u\in\mathbb R$ and $|g'_h(u)|\leq 2|u|$ uniformly with respect to $h$.

We use the contraction principle to prove well-posedness of this problem in the space $X^0(\Pi_T)$ for any $T>0$.

Fix $T>0$. For $t_0\in(0,T]$ define a mapping $\Lambda$ on a set $X^0(\Pi_{t_0})$ as follows: $u=\Lambda v\in X^0(\Pi_{t_0})$ is a generalized solution to a linear problem
\begin{equation}\label{3.6}
u_t+u_{xxx}+u_{xyy}-\delta (u_{xx}+ u_{yy})=-(g_h(v))_x 
\end{equation}
in $\Pi_{t_0}$ with initial and boundary conditions \eqref{1.2}, \eqref{1.3}.

Note that $|g_h(v)|\leq 2|v|/h$ and, therefore, $g_h(u)\in L_2(\Pi_{t_0})$.
According to Lemma~\ref{L2.4} the mapping $\Lambda$ exists. Moreover, for functions $v,\widetilde v \in X^0(\Pi_{t_0})$
$$
\|g_h(v)-g_h(\widetilde v)\|_{L_2(\Pi_{t_0})}  \leq
\frac 2h\|v-\widetilde v\|_{L_2(\Pi_{t_0})} \leq
\frac {2t_0^{1/2}}h\|v-\widetilde v\|_{C([0,t_0];L_2)}.
$$
Inequality \eqref{2.3} yields that 
$$
\|\Lambda v - \Lambda\widetilde v\|_{X^0(\Pi_{t_0})} \leq 
\frac {c(T,\delta)}h t_0^{1/2} \|v-\widetilde v\|_{X^0(\Pi_{t_0})},
$$
that is for small $t_0$, depending only on $T$, $\delta$ and $h$, the mapping $\Lambda$ is the contraction in $X^0(\Pi_{t_0})$. Since $t_0$ is uniform with respect to $\|u_0\|_{L_2}$ by the standard argument we construct a solution to problem \eqref{3.4}, \eqref{1.2}, \eqref{1.3} $u_h\in X^0(\Pi_T)$.

Now establish appropriate estimates for functions $u_h$ uniform with respect to $h$. Equality \eqref{2.4} (where $f_0=f_2\equiv 0$, $f_1\equiv -g_h(u)$) provides that
\begin{multline}\label{3.7}
\iint u_h^2(t,x,y)\,dxdy +2\delta\int_0^t \!\! \iint (u_{hx}^2 +u_{hy}^2)\,dxdyd\tau = \iint u_0^2\,dxdy \\
+2\int_0^t \!\! \iint g_h(u_h)u_{hx}\,dxdyd\tau.
\end{multline}
Since the last integral is obviously equal to zero it follows from \eqref{3.7} that uniformly with respect to $h$
\begin{equation}\label{3.8}
\|u_h\|_{X^0(\Pi_T)} \leq c.
\end{equation}
Therefore, uniformly with respect to $h$
\begin{equation}\label{3.9}
\|g_h(u_h)\|_{L_2(\Pi_T)} \leq \|u_h^2\|_{L_2(\Pi_T)} \leq c.
\end{equation}
From estimates \eqref{3.8}, \eqref{3.9} and equation \eqref{3.4} itself follows that uniformly with respect to $h$
\begin{equation}\label{3.10}
\|u_{ht}\|_{L_2(0,T;H^{-2})} \leq c.
\end{equation}
Inequalities \eqref{3.2}, \eqref{3.8}--\eqref{3.10} by the standard argument provide existence of a weak solution $u$ to 
problem \eqref{1.1}--\eqref{1.3} in $L_\infty(0,T;L_2)\cap L_2(0,T;H^1_0)$.

Next, Lemma~\ref{L2.3} (where $f_0=f_2\equiv 0$, $f_1\equiv -u^2/2\in L_2(\Pi_T)$) provides that (after possible change on a set of the zero measure) $u\in C([0,T];L_2)$ and similarly to \eqref{3.7}
\begin{equation}\label{3.11}
\iint u^2(t,x,y)\,dxdy +2\delta \int_0^t \!\!\iint |Du|^2\,dxdyd\tau = \iint u_0^2\,dxdy. 
\end{equation}
In particular, equality \eqref{3.11} yields that the function $\|u(t,\cdot,\cdot)\|^2_{L_2}$ is absolutely continuous and equality \eqref{3.3} is satisfied.

Finally, establish properties of uniqueness and continuous dependence. Let $u$ and $\widetilde u$ be two solutions in the considered space corresponding to initial data $u_0$ and $\widetilde u_0$, $v\equiv u-\widetilde u$, 
$v_0\equiv u_0 -\widetilde u_0$. Then the function $v$ is a weak solution to a linear problem
\begin{gather}\label{3.12}
v_t+v_{xxx}+v_{xyy}-\delta(v_{xx}+v_{yy}) =\frac 12 (\widetilde u^2 -u^2)_x,\\
\label{3.13}
v\big|_{t=0}=v_0,\qquad v\big|_{y=0}=v\big|_{y=L}=0.
\end{gather}
Obviously the hypothesis of Lemma~\ref{L2.3} are satisfied for this problem and equality \eqref{2.4} provides that
\begin{multline*}
\iint v^2(t,x,y)\,dxdy +2\delta \int_0^t\!\! \iint |Dv|^2\,dxdyd\tau = 
\iint v_0^2\,dxdy \\+
\int_0^t\!\! \iint (u+\widetilde u)vv_x\,dxdyd\tau.
\end{multline*}
Here
\begin{multline*}
\iint |uvv_x|\,dxdy \leq
\Bigl(\iint u^4\,dxdy\Bigr)^{1/4} \Bigl(\iint v^4\,dxdy\Bigr)^{1/4} 
\Bigl(\iint v_x^2\,dxdy\Bigr)^{1/2}\\
\leq c\Bigl(\iint (|Du|^2+u^2)\,dxdy\Bigr)^{1/4} \Bigl(\iint u^2\,dxdx\Bigr)^{1/4} \\ \times
\Bigl(\iint (|Dv|^2+v^2)\,dxdy\Bigr)^{3/4} \Bigl(\iint v^2\,dxdy\Bigr)^{1/4} \\
\leq \varepsilon \iint |Dv|^2\,dxdy +c(\varepsilon)\iint (|Du|^2+u^2)\,dxdy
\iint v^2\,dxdy,
\end{multline*}
where $\varepsilon>0$ can be chosen arbitrarily small. With use of \eqref{3.11} we finish the proof of the theorem.
\end{proof}

\begin{theorem}\label{T3.4}
Let $u_0\in H^1_0$. Then problem \eqref{1.1}--\eqref{1.3} has a unique weak solution $u$ in $\Pi$ such that $u\in X^1(\Pi_T)$ for any $T>0$. The mapping $u_0\mapsto u$ in Lipschitz continuous on any ball in the norm of the mapping from $H^1$ into $X^1(\Pi_T)$ and 
\begin{equation}\label{3.14}
\|u\|_{X^1(\Pi_T)} \leq \varkappa_1(T,\|u_0\|_{L_2}) \|u_0\|_{H^1},
\end{equation}
where the positive function $\varkappa_1$ is nondecreasing with respect to its arguments. Moreover, the function 
$\bigl\||Du|(t,\cdot,\cdot)\bigr\|^2_{L_2}$ is absolutely continuous for $t\geq 0$ and
\begin{multline}\label{3.15}
\frac d{dt} \iint (u_x^2+u_y^2)\,dxdy 
+2\delta \iint (u^2_{xx}+2u^2_{xy}+u^2_{yy})\,dxdy  \\
= 2\iint uu_x(u_{xx}+u_{yy})\,dxdy \quad \text{for a.e.}\ t>0.
\end{multline}
\end{theorem}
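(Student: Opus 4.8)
The plan is to obtain Theorem~\ref{T3.4} by the same regularization-and-compactness scheme used for Theorem~\ref{T3.3}, but now working at the $H^1$ level. First I would return to the approximating problems \eqref{3.4}, \eqref{1.2}, \eqref{1.3} with the truncated nonlinearity $g_h$. Since $u_0\in H^1_0$ and $(g_h(u_h))_x$ with $g_h'$ bounded by $2/h$ gives a forcing term that is controlled in $L_2(\Pi_{t_0})$, Lemma~\ref{L2.5} applies and yields approximate solutions $u_h\in X^1(\Pi_T)$ for every $T>0$ (the local-in-time contraction argument being already available from the proof of Theorem~\ref{T3.3}, which also gives the $X^0$ bound \eqref{3.8}). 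The task is then to derive an \emph{a priori} bound on $\|u_h\|_{X^1(\Pi_T)}$ uniform in $h$.

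The key step is the energy identity \eqref{2.7} applied with $f_0\equiv 0$ and $f_1\equiv -g_h(u_h)$, which gives
$$
\iint |Du_h|^2\,dxdy + 2\delta\int_0^t\!\!\iint(u_{hxx}^2+2u_{hxy}^2+u_{hyy}^2)\,dxdyd\tau
=\iint |Du_0|^2\,dxdy + 2\int_0^t\!\!\iint g_h(u_h)(u_{hxx}+u_{hyy})\,dxdyd\tau.
$$
Because $|g_h(u_h)|\le u_h^2$, the nonlinear term is bounded by $2\|u_h^2\|_{L_2(\Pi_t)}\bigl(\|u_{hxx}\|_{L_2(\Pi_t)}+\|u_{hyy}\|_{L_2(\Pi_t)}\bigr)$; using \eqref{3.2} (or directly \eqref{1.5}) one estimates $\|u_h^2\|_{L_2(\Pi_t)}$ by $\|u_h\|_{L_2(0,t;H^1)}\|u_h\|_{L_\infty(0,t;L_2)}$, which is already bounded by \eqref{3.8}. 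A Young-inequality split absorbs $\delta$-fraction of $\|u_{hxx}\|_{L_2}^2+\|u_{hyy}\|_{L_2}^2$ into the left side, and what remains is controlled by the $X^0$-bound together with $\|u_0\|_{H^1}$. This delivers $\|u_h\|_{X^1(\Pi_T)}\le c$ uniformly in $h$, with the constant of the nondecreasing form $\varkappa_1(T,\|u_0\|_{L_2})\|u_0\|_{H^1}$ claimed in \eqref{3.14}; an interpolation/Gronwall bookkeeping makes the dependence on $\|u_0\|_{L_2}$ explicit while keeping linearity in $\|u_0\|_{H^1}$. With these bounds, $u_{ht}$ is bounded in a negative-order space as before, a subsequence converges, and the limit $u$ is a weak solution lying in $X^1(\Pi_T)$; since $g_h(u)=u^2/2$ once $|u|\le 1/h$, the limit satisfies the untruncated equation.

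Next I would upgrade the weak convergence to the identity \eqref{3.15}. Apply Lemma~\ref{L2.5} to the limit $u$ itself, viewing the nonlinearity as $f_1\equiv -u^2/2\in L_2(\Pi_T)$ (finite by \eqref{3.2}); this puts $u$ in $X^1(\Pi_T)$ with, after modification on a null set, $|Du|\in C([0,T];L_2)$, and gives the integrated energy identity \eqref{2.7} with $f_{0}\equiv0$, $f_1\equiv -u^2/2$. Rewriting $\iint u^2(u_{xx}+u_{yy})\,dxdy$ as $2\iint uu_x(u_{xx}+u_{yy})\,dxdy$ via the identity $u^2(u_{xx}+u_{yy})=(u^2u_x)_x+(u^2u_y)_y-2u u_x^2-2uu_y^2$ — or more directly $-\tfrac12\partial_x(u^2)u_{xx}-\tfrac12\partial_y(u^2)u_{yy}$ integrated by parts — produces \eqref{3.15}, and absolute continuity of $\||Du|(t)\|_{L_2}^2$ follows from the integrated form. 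Uniqueness and Lipschitz dependence in the $X^1$ norm are then obtained exactly as in Theorem~\ref{T3.3}: for $v=u-\widetilde u$ solving the linear problem \eqref{3.12}--\eqref{3.13}, apply the $H^1$-energy estimate \eqref{2.7} to $v$, bound the resulting nonlinear terms involving $(u+\widetilde u)$ and derivatives of $v$ by the interpolation inequality \eqref{1.5} (four-factor Hölder splittings as in the $L_2$ proof, now with one more derivative), absorb the top-order $v$-terms with small $\varepsilon$, and close by Gronwall using the already-established $X^1$ bounds for $u$ and $\widetilde u$.

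The main obstacle is the a~priori $H^1$ estimate: the nonlinear term $2\iint g_h(u_h)(u_{hxx}+u_{hyy})\,dxdy$ in \eqref{2.7} pairs the quadratic nonlinearity against \emph{second} derivatives, so unlike the $L_2$ case it does not vanish and must be genuinely controlled. The saving feature is that the full second-order dissipation $2\delta\iint(u_{xx}^2+2u_{xy}^2+u_{yy}^2)$ is present on the left, so a Cauchy–Schwarz–Young split suffices provided one first bounds $\|u_h^2\|_{L_2}$ purely in terms of the $X^0$-norm — which is exactly what \eqref{3.2} furnishes. Everything else (local existence, compactness, passage to the limit, uniqueness) is a routine repetition of the arguments already carried out for Theorem~\ref{T3.3}, now one derivative higher.
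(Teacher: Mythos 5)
There is a genuine gap in the central a~priori estimate. In Lemma~\ref{L2.5} the source $f_1$ enters equation \eqref{2.1} \emph{directly}, not as $f_{1x}$ as in Lemma~\ref{L2.4}; hence for the truncated equation \eqref{3.4} the correct identification is $f_1\equiv-(g_h(u_h))_x=-g_h'(u_h)u_{hx}$, and the right-hand side of \eqref{2.7} is $2\int_0^t\iint g_h'(u_h)u_{hx}(u_{hxx}+u_{hyy})\,dxdyd\tau$ --- not $2\int_0^t\iint g_h(u_h)(u_{hxx}+u_{hyy})\,dxdyd\tau$ as you wrote (integrating by parts to remove the derivative from $g_h(u_h)$ produces \emph{third} derivatives $u_{hxxx},u_{hxyy}$, not second ones). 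This changes everything: the quantity you must control is $\|g_h'(u_h)u_{hx}\|_{L_2(\Pi_t)}\lesssim\|u_hu_{hx}\|_{L_2(\Pi_t)}$, which is exactly the object of \eqref{3.17} and is \emph{not} bounded by the $X^0$-norm --- it requires $\|u_h\|_{L_2(0,t;H^2)}^{1/2}\|u_h\|_{C([0,t];H^1)}^{3/2}$, i.e.\ the $X^1$-norm you are trying to estimate. Your appeal to \eqref{3.2} bounds $\|u_h^2\|_{L_2(\Pi_t)}$, which is the wrong quantity. If one runs the Cauchy--Schwarz--Young split on the correct term using \eqref{1.5}, the Gronwall inequality that results is of Riccati type, $Y'\lesssim Y^2$ with $Y=\iint|Du_h|^2$, which gives only a local-in-time bound. (It can be rescued: a further integration by parts turns the term into $-\int_0^t\iint g_h''(u_h)u_{hx}|Du_h|^2$, and since $\int_0^\infty\||Du|\|_{L_2}^2\,dt\leq\|u_0\|_{L_2}^2/(2\delta)$ by \eqref{3.11} a Gronwall argument with time-integrable coefficient closes --- but none of this is in your proposal, and your stated ``saving feature'' explicitly rests on the incorrect bound.)

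The paper's route is different on both counts. Existence at the $H^1$ level is obtained by the contraction principle applied \emph{directly} to the untruncated problem (no $g_h$-regularization is needed, since \eqref{3.17}--\eqref{3.18} show $vv_x\in L_2(\Pi_{t_0})$ for $v\in X^1$), and the global a~priori bound is obtained not by brute-force absorption but by \emph{cancellation}: adding the second energy identity \eqref{2.9} of Lemma~\ref{L2.6} (the $\iint u^3$ functional) to \eqref{3.21} makes the dangerous term $2\int_0^t\iint uu_x(u_{xx}+u_{yy})$ disappear exactly in \eqref{3.23}, and the leftover terms $\iint u^3$ and $\delta\int_0^t\iint u^2(u_{xx}+u_{yy})$ are controlled by \eqref{1.5} together with the $L_2$ identity \eqref{3.11}. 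Your derivation of \eqref{3.15} suffers from the same misidentification: with the correct $f_1\equiv-uu_x$ the right-hand side of \eqref{2.7} is already $2\int_0^t\iint uu_x(u_{xx}+u_{yy})$ and no rewriting is required; the pointwise identity you propose relates $\iint u^2(u_{xx}+u_{yy})$ to $-2\iint u|Du|^2$, not to $2\iint uu_x(u_{xx}+u_{yy})$.
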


\begin{proof}
As in the proof of Theorem~\ref{T3.3} we first apply the contraction principle but for the original problem. To this end consider an initial-boundary value problem for a linear equation
\begin{equation}\label{3.16}
u_t+u_{xxx}+u_{xyy}-\delta(u_{xx}+u_{yy}) = -vv_x
\end{equation}
with initial and boundary conditions \eqref{1.2}, \eqref{1.3}. Again fix $T>0$. For $t_0\in (0,T]$ let $v\in X^1(\Pi_{t_0})$ 
and $u=\Lambda v$ be a solution to this problem from the space $X^1(\Pi_{t_0})$ also.
Note that by virtue of \eqref{1.5}
\begin{multline}\label{3.17}
\|vv_x\|_{L_2(\Pi_{t_0})} \leq
\Bigl[\int_0^{t_0} \Bigl(\sup\limits_{(x,y)\in\Sigma} v^2 \iint v_x^2\,dxdy\Bigr)\,dt\Bigr]^{1/2} \\ \leq
c\Bigl[\int_0^{t_0} \Bigl(\iint \bigl(|D^2v|^2+v^2\bigr)\,dxdy \iint v^2\,dxdy\Bigr)^{1/2}\,dt\Bigr]^{1/2}
\sup\limits_{t\in(0,t_0)} \Bigl(\iint v_x^2\,dxdy\Bigr)^{1/2} \\ \leq
ct_0^{1/4} \|v\|_{L_2(0,t_0;H^2)}^{1/2} \|v\|_{C([0,t_0];H^1)}^{3/2} \leq ct_0^{1/4}\|v\|^2_{X^1(\Pi_{t_0})}
\end{multline}
and similarly
\begin{equation}\label{3.18}
\|vv_x-\widetilde v\widetilde v_x\|_{L_2(\Pi_{t_0})} \leq 
ct_0^{1/4}\bigl(\|v\|_{X^1(\Pi_{t_0})}+\|\widetilde v\|_{X^1(\Pi_{t_0})}\bigr) 
\|v-\widetilde v\|_{X^1(\Pi_{t_0})}.
\end{equation}
In particular, the hypothesis of Lemma~\ref{L2.5} is satisfied (for $f_0\equiv 0$, $f_1\equiv -vv_x$) and, therefore, the mapping $\Lambda$ exists. Moreover, inequalities \eqref{2.6}, \eqref{3.17}, \eqref{3.18} provide that
\begin{equation}\label{3.19}
\|\Lambda v\|_{X^1(\Pi_{t_0})} \leq c(T,\delta)\bigl[\|u_0\|_{H^1} + t_0^{1/4}\|v\|^2_{X^1(\Pi_{t_0})}\bigr],
\end{equation}
\begin{multline}\label{3.20}
\|\Lambda v -\lambda \widetilde v\|_{X^1(\Pi_{t_0})} \leq c(T,\delta)\bigl[\|u_0-\widetilde u_0\|_{H^1}  \\+ 
t_0^{1/4}\bigl(\|v\|_{X^1(\Pi_{t_0})}+\|\widetilde v\|_{X^1(\Pi_{t_0})}\bigr) 
\|v-\widetilde v\|_{X^1(\Pi_{t_0})}\bigr].
\end{multline}
Local well-posedness of problem \eqref{1.1}--\eqref{1.3} on the time interval $(0,t_0)$ depending on $\|u_0\|_{H^1}$ follows from \eqref{3.19}, \eqref{3.20} by the standard argument.

In order to extend this local solution to an arbitrary time interval establish the corresponding a priori estimate. Let 
$u\in X^1(\Pi_{T'})$ be a solution to problem \eqref{1.1}--\eqref{1.3}. Again apply Lemma~\ref{L2.5}, where 
$f_0\equiv 0$, $f_1\equiv -uu_x$. It follows from equality \eqref{2.7} that
\begin{multline}\label{3.21}
\iint (u_x^2+u_y^2)\,dxdy +2\delta\int_0^t\!\! \iint (u_{xx}^2+2u^2_{xy}+u^2_{yy})\,dxdyd\tau =
\iint (u_{0\,x}^2+u_{0\,y}^2)\,dxdy  \\+
2\int_0^t\!\! \iint uu_x(u_{xx}+u_{yy})\,dxdyd\tau.
\end{multline}
Next, apply Lemma~\ref{L2.6}, then equality \eqref{2.9} yields that
\begin{multline}\label{3.22}
-\frac 13 \iint u^3(t,x,y)\,dxdy
+2\int_0^t\!\! \iint uu_x(u_{xx}+u_{yy})\,dxdyd\tau \\
+\delta \int_0^t\!\! \iint u^2(u_{xx}+u_{yy})\,dxdyd\tau =
-\frac 13 \iint u_0^3\,dxdy + \int_0^t\!\! \iint u^3u_x\,dxdyd\tau.
\end{multline}
Summing \eqref{3.21} and \eqref{3.22} provides an equality
\begin{multline}\label{3.23}
\iint \Bigl(u_x^2+u_y^2-\frac 13 u^3 \Bigr)\,dxdy
+2\delta\int_0^t\!\! \iint (u_{xx}^2+2u^2_{xy}+u^2_{yy})\,dxdyd\tau \\ +
\delta \int_0^t\!\! \iint u^2(u_{xx}+u_{yy})\,dxdyd\tau =
\iint \Bigl(u_{0\,x}^2+u_{0\,y}^2-\frac 13 u_0^3\Bigr)\,dxdy.
\end{multline}
By virtue of \eqref{1.5} and \eqref{3.11}
\begin{multline}\label{3.24}
\iint |u|^3\,dxdy \leq c\Bigl(\iint (|Du|^2+u^2)\,dxdy\Bigr)^{1/2} \iint u^2\,dxdy \\ \leq
\varepsilon \iint |Du|^2\,dxdy +c(\varepsilon)\left(\|u_0\|^2_{L_2}+\|u_0\|^4_{L_2}\right),
\end{multline}
\begin{multline}\label{3.25}
\Bigl| \iint u^2(u_{xx}+u_{yy})\,dxdy \Bigr| \leq 
c\Bigl(\iint |D^2u|^2\,dxdx\Bigr)^{1/2} \Bigl(\iint u^4\,dxdy\,\Bigr)^{1/2} \\ \leq
c_1\Bigl(\iint (|D^2u|^2+u^2)\,dxdx\Bigr)^{3/4} \Bigl(\iint u^2\,dxdx\Bigr)^{3/4} \\ \leq
\varepsilon \iint |D^2u|^2\,dxdy +c(\varepsilon) \left(\|u_0\|^3_{L_2}+\|u_0\|^6_{L_2}\right),
\end{multline}
where $\varepsilon>0$ can be chosen arbitrarily small. Combining \eqref{3.23}--\eqref{3.25} yields an inequality
\begin{equation}\label{3.26}
\sup\limits_{t\in (0,T')} \iint |Du|^2\,dxdy + \int_0^{T'}\!\! \iint |D^2u|^2\,dxdydt  \leq 
c(T')\left(1+\|u_0\|_{L_2}^4\right)\|u_0\|_{H^1}^2.
\end{equation}
This estimate provides the desired global well-posedness and, moreover, estimate \eqref{3.14}.

Finally, note that for the solution $u\in X^1(\Pi_T)$ similarly to \eqref{2.10} $uu_x\in L_2(\Pi_T)$ and, therefore,  
$\iint uu_x(u_{xx}+u_{yy})\,dxdy \in L_1(0,T)$. As a result, it follows from \eqref{3.21} that 
$\bigl\||Du|(t,\cdot,\cdot)\bigr\|^2_{L_2}$ is absolutely continuous and equality \eqref{3.15} holds.
\end{proof}

\begin{corollary}\label{C3.5}
Let $u_0\in H^s$ for certain $s\in (0,1)$ and, in addition, $u_0\big|_{y=0}=u_0\big|_{y=L}= 0$ if $s>1/2$ and 
$(y^{-1/2}+(L-y)^{-1/2})u_0\in L_2$ if $s=1/2$. Then problem \eqref{1.1}--\eqref{1.3} has a unique weak solution $u$ in $\Pi$ such that $u \in X^s(\Pi_T)$ for any $T>0$. Moreover,
\begin{equation}\label{3.27}
\|u\|_{X^s(\Pi_T)} \leq \varkappa_s(T,\|u_0\|_{L_2}) \|u_0\|_{H^s},
\end{equation}
where the positive function $\varkappa_s$ is nondecreasing with respect to its arguments.
\end{corollary}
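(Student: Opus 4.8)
The plan is to derive the assertion by nonlinear real interpolation between Theorem~\ref{T3.3} (the case $s=0$) and Theorem~\ref{T3.4} (the case $s=1$) by means of Lemma~\ref{L1.3}. Fix $T>0$ and let $\mathcal A\colon u_0\mapsto u$ denote the solution operator of problem \eqref{1.1}--\eqref{1.3} on $\Pi_T$; by Theorem~\ref{T3.3} it is well defined from $L_2$ into $X^0(\Pi_T)$. I would apply Lemma~\ref{L1.3} with $B_0^1=L_2$, $B_1^1=H_0^1$, $B_0^2=X^0(\Pi_T)$, $B_1^2=X^1(\Pi_T)$; the continuous inclusions $H_0^1\subset L_2$ and $X^1(\Pi_T)\subset X^0(\Pi_T)$ are immediate. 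The hypothesis on $c_1$ in Lemma~\ref{L1.3} is precisely the Lipschitz continuity of $\mathcal A$ from $L_2$ into $X^0(\Pi_T)$ on balls granted by Theorem~\ref{T3.3}, and the hypothesis on $c_2$ is exactly estimate \eqref{3.14}, namely $\|\mathcal A h\|_{X^1(\Pi_T)}\le\varkappa_1(T,\|h\|_{L_2})\|h\|_{H^1}$ for $h\in H_0^1$. Hence Lemma~\ref{L1.3} gives, for every $\theta\in(0,1)$, that $\mathcal A$ maps $(L_2,H_0^1)_{\theta,2}$ into $(X^0(\Pi_T),X^1(\Pi_T))_{\theta,2}$ with $\|\mathcal A f\|_{(X^0,X^1)_{\theta,2}}\le c(\|f\|_{L_2})\|f\|_{(L_2,H_0^1)_{\theta,2}}$, the function $c$ being nondecreasing.

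It then remains to identify both interpolation spaces for $\theta=s$. On the data side, the real interpolation spaces between $L_2$ and $H_0^1$ are, with equivalent norms, $(L_2,H_0^1)_{s,2}=H^s$ for $s\in(0,1/2)$ and $(L_2,H_0^1)_{s,2}=\{v\in H^s:v|_{y=0}=v|_{y=L}=0\}$ for $s\in(1/2,1)$, while for $s=1/2$ it is the Lions--Magenes space, whose norm is equivalent to $\|v\|_{H^{1/2}}+\bigl\|(y^{-1/2}+(L-y)^{-1/2})v\bigr\|_{L_2}$ by Hardy's inequality. This is exactly the class of admissible initial data singled out in the statement, so $\|f\|_{(L_2,H_0^1)_{s,2}}$ is controlled by the quantity appearing on the right-hand side of \eqref{3.27}. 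On the solution side, write $X^0(\Pi_T)=A_0\cap B_0$ and $X^1(\Pi_T)=A_1\cap B_1$ with $A_j=C([0,T];H^j)$ and $B_j=L_2(0,T;H^{j+1}\cap H_0^1)$; boundedness of the inclusions into the separate factors yields $(X^0,X^1)_{s,2}\subset(A_0,A_1)_{s,2}\cap(B_0,B_1)_{s,2}$. Since real interpolation commutes with $L_2(0,T;\cdot)$ and since in the range $0<s<1$ only the Dirichlet trace is involved (so the condition $v|_{y=0}=v|_{y=L}=0$ persists), one gets $(B_0,B_1)_{s,2}=L_2(0,T;H^{s+1}\cap H_0^1)$, while $(A_0,A_1)_{s,2}\hookrightarrow C([0,T];H^s)$; consequently $(X^0(\Pi_T),X^1(\Pi_T))_{s,2}\subset X^s(\Pi_T)$. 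Therefore $u=\mathcal A u_0$ lies in $X^s(\Pi_T)$ and obeys \eqref{3.27} with $\varkappa_s$ built from $c$ and the $T$-dependent interpolation constants. Since $u_0\in L_2$, this $u$ is precisely the unique weak solution furnished by Theorem~\ref{T3.3}, which yields uniqueness in $\Pi$; membership in $X^s(\Pi_T)$ for every $T>0$ follows because $T$ was arbitrary.

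I expect the only genuinely delicate point to be the clean identification of the endpoint interpolation spaces. On the data side the critical value is $s=1/2$, where one must recognize the Lions--Magenes space and match it, through Hardy's inequality, to the weighted $L_2$ condition in the hypothesis. On the solution side one must argue carefully that real interpolation of the intersection spaces $X^0(\Pi_T)$ and $X^1(\Pi_T)$ lands inside $X^s(\Pi_T)$ --- here the inclusion, not an equality, is all that is needed --- and, in particular, that the strong continuity in time at the $H^s$ level survives; the latter can be recovered in the spirit of Section~\ref{S2}, by treating the nonlinear term $uu_x$ as a forcing and invoking the linear estimates there, exactly as strong continuity in time was obtained in the proofs of Theorems~\ref{T3.3} and~\ref{T3.4}.
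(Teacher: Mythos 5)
Your proposal is correct and follows essentially the same route as the paper: the paper's own proof is a three-sentence sketch that invokes Lemma~\ref{L1.3} with Theorems~\ref{T3.3} and~\ref{T3.4}, citing \cite{LM} for the identification of the interpolation scale of data spaces and asserting that the spaces $X^s(\Pi_T)$ form the same scale. You have merely filled in the details the paper leaves implicit --- the choice of $B_j^i$, the matching of the hypotheses on $c_1$ and $c_2$ with the Lipschitz bound of Theorem~\ref{T3.3} and estimate \eqref{3.14}, and the identification of the endpoint and intermediate spaces including the Lions--Magenes space at $s=1/2$.
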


\begin{proof}
Results from \cite{LM} ensure that under the hypothesis of the corollary the function $u_0$ belong to spaces which form the real interpolation scale $(\cdot,\cdot)_{\theta,2}$. The spaces $X^s(\Pi_T)$ also form the same interpolation scale. Then the corollary succeeds from Lemma~\ref{L1.3} and Theorems~\ref{T3.3} and~\ref{T3.4}. 
\end{proof}

\begin{theorem}\label{T3.6}
Let $u_0\in H^2\cap H^1_0$. Then problem \eqref{1.1}--\eqref{1.3} has a unique weak solution $u$ in $\Pi$ such that $u\in X^2(\Pi_T)$ for any $T>0$. The mapping $u_0\mapsto u$ is Lipschitz continuous on any ball in the norm of the mapping from $H^2$ into $X^2(\Pi_T)$ and 
\begin{equation}\label{3.28}
\|u\|_{X^2(\Pi_T)} \leq \varkappa_2(T,\|u_0\|_{H^1}) \|u_0\|_{H^2},
\end{equation}
where the positive function $\varkappa_2$ is nondecreasing with respect to its arguments. Moreover, the function 
$\bigl\||D^2u|(t,\cdot,\cdot)\bigr\|^2_{L_2}$ is absolutely continuous for $t\geq 0$ and
\begin{multline}\label{3.29}
\frac d{dt} \iint (u_{xx}^2+u_{xy}^2+u_y^2)\,dxdy 
+2\delta \iint (u^2_{xxx}+2u^2_{xxy}+2u^2_{xyy}+u^2_{yyy})\,dxdy  \\
= -2\iint \bigl((uu_x)_{xx}u_{xx}+(uu_x)_{xy}u_{xy}+(uu_x)_{yy}u_{yy}\bigr)\,dxdy \quad \text{for a.e.}\ t>0.
\end{multline}
\end{theorem}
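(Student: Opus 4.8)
The plan is to follow the scheme of Theorems~\ref{T3.3} and~\ref{T3.4}: build a local solution in $X^2$ by the contraction principle on the basis of Lemma~\ref{L2.7}, derive an a priori estimate that both extends it to an arbitrary interval and gives \eqref{3.28}, and then obtain Lipschitz dependence and the energy relation. For the local step, fix $T>0$; for $t_0\in(0,T]$ and $v\in X^2(\Pi_{t_0})$ let $u=\Lambda v\in X^2(\Pi_{t_0})$ be the generalized solution of the linear problem \eqref{2.1}, \eqref{1.2}, \eqref{1.3} with $f\equiv -vv_x$. Since $v|_{y=0}=v|_{y=L}=0$, the function $vv_x$ vanishes on the lateral boundary, and estimating the terms of its derivatives up to the second order by \eqref{1.5} (for instance $\|vv_{xxx}\|_{L_2}\leq c\|v\|_{H^2}\|v\|_{H^3}$ and $\|v_xv_{xx}\|_{L_2}\leq c\|v\|_{H^2}\|v\|_{H^3}$) yields $\|vv_x\|_{L_1(0,t_0;H^2\cap H^1_0)}\leq ct_0^{1/2}\|v\|_{X^2(\Pi_{t_0})}^2$ together with the analogous bound for $vv_x-\widetilde v\widetilde v_x$. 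Hence Lemma~\ref{L2.7} applies and, exactly as in \eqref{3.19}--\eqref{3.20}, $\Lambda$ is a contraction in $X^2(\Pi_{t_0})$ for $t_0$ small in terms of $T$, $\delta$ and $\|u_0\|_{H^2}$; this produces a local solution $u\in X^2(\Pi_{t_0})$, which, being an element of $X^0(\Pi_{t_0})$, coincides with the weak solution of Theorem~\ref{T3.3} (so uniqueness is already known).

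The core is the global a priori estimate. Let $u\in X^2(\Pi_{T'})$ solve \eqref{1.1}--\eqref{1.3}; then $uu_x\in L_1(0,T';H^2\cap H^1_0)$ and Lemma~\ref{L2.7} with $f\equiv -uu_x$ turns the right-hand side of \eqref{2.12} into $-2\int_0^t\!\!\iint N\,dxdyd\tau$, where $N=(uu_x)_{xx}u_{xx}+(uu_x)_{xy}u_{xy}+(uu_x)_{yy}u_{yy}$. Expanding the products, the three terms of $N$ in which $u$ multiplies a third-order and a second-order derivative combine into $\tfrac12 u\,\partial_x|D^2u|^2$, which after one integration by parts in $x$ contributes $-\tfrac12 u_x|D^2u|^2$; every remaining term is a product of one first-order and two second-order derivatives, so $\bigl|\iint N\,dxdy\bigr|\leq c\iint |Du|\,|D^2u|^2\,dxdy$. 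By H\"older's inequality, by \eqref{1.5} with $k=3$ applied to $\bigl\||D^2u|\bigr\|_{L_4}$, and by Young's inequality,
\begin{equation*}
\iint |Du|\,|D^2u|^2\,dxdy\leq \bigl\||Du|\bigr\|_{L_2}\bigl\||D^2u|\bigr\|_{L_4}^2\leq \varepsilon\bigl\||D^3u|\bigr\|_{L_2}^2+c(\varepsilon)\bigl(\bigl\||Du|\bigr\|_{L_2}^6+\bigl\||Du|\bigr\|_{L_2}\bigr)\|u\|_{L_2}^2
\end{equation*}
with $\varepsilon>0$ arbitrarily small. By \eqref{3.11} $\|u\|_{L_2}=\|u_0\|_{L_2}$ and by \eqref{3.14} $\bigl\||Du|\bigr\|_{L_2}\leq\varkappa_1(T,\|u_0\|_{L_2})\|u_0\|_{H^1}$, so, using $\|u_0\|_{L_2}\leq\|u_0\|_{H^1}$, one gets $\bigl|\iint N\,dxdy\bigr|\leq \varepsilon\bigl\||D^3u|\bigr\|_{L_2}^2+C(T,\|u_0\|_{H^1})\|u_0\|_{H^1}^2$. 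Choosing $\varepsilon$ small relative to $\delta$ so that the first summand is absorbed by the dissipative term of \eqref{2.12}, and using $\|u_0\|_{H^1}\leq\|u_0\|_{H^2}$, one obtains
\begin{equation*}
\sup\limits_{t\in(0,T')}\iint |D^2u|^2\,dxdy+\int_0^{T'}\!\!\iint |D^3u|^2\,dxdydt\leq c(T',\|u_0\|_{H^1})\|u_0\|_{H^2}^2,
\end{equation*}
which together with \eqref{3.11} extends the local solution to $(0,T)$ for every $T>0$ and gives \eqref{3.28}.

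It remains to treat the Lipschitz dependence and \eqref{3.29}. If $u,\widetilde u$ solve the problem with data $u_0,\widetilde u_0$, then $v=u-\widetilde u$ is the generalized solution of \eqref{2.1}, \eqref{1.2}, \eqref{1.3} with $f\equiv -\tfrac12\bigl((u+\widetilde u)v\bigr)_x$, which again vanishes on $y=0,L$; applying Lemma~\ref{L2.7}, estimating $\bigl\|\bigl((u+\widetilde u)v\bigr)_x\bigr\|_{L_1(0,t_0;H^2)}$ by \eqref{1.5}, and using \eqref{3.28} for $u$ and $\widetilde u$, one gets $\|v\|_{X^2(\Pi_{t_0})}\leq c\|v_0\|_{H^2}$ on a short interval and then, by iteration, on $(0,T)$, with $c$ depending only on $T$ and on the $H^2$-radius of a ball containing $u_0$ and $\widetilde u_0$. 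Finally, the bound on $N$ above shows $\iint N\,dxdy\in L_1(0,T)$, since $\bigl\||D^3u|\bigr\|_{L_2}^2\in L_1(0,T)$; hence the integrated form of \eqref{2.12} with $f\equiv -uu_x$ may be differentiated in $t$, which gives the absolute continuity of $\bigl\||D^2u|(t,\cdot,\cdot)\bigr\|_{L_2}^2$ and equality \eqref{3.29}.

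The main obstacle is the a priori estimate, specifically the treatment of the cubic term $N$. The key points are the integration by parts in $x$ that brings $N$ to the form $\int|Du|\,|D^2u|^2$, and the subsequent decision to keep $|Du|$ in $L_2$ so that the already available a priori $H^1$ bound \eqref{3.14} on $u$ can be invoked (while $|D^2u|$ is interpolated in $L_4$ against $H^3$, absorbed into the dissipation). Estimating $|Du|$ in $L_\infty$ instead would produce a superlinear differential inequality for $\iint|D^2u|^2$, which in general has no global-in-time solution.
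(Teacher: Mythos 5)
Your proposal is correct and follows essentially the same route as the paper: contraction in $X^2(\Pi_{t_0})$ via Lemma~\ref{L2.7} with $f=-vv_x$ estimated through \eqref{1.5}, a global a priori bound from \eqref{2.12} with $f=-uu_x$, and the energy identity \eqref{3.29} from integrability of the cubic term. The only (harmless) deviation is in handling $N$: you integrate by parts to reduce everything to $\iint |Du|\,|D^2u|^2$ and close without Gronwall, whereas the paper estimates $\iint|uu_{xx}u_{xxx}|$ and $\iint|u_x|u_{xx}^2$ directly and keeps a linear $\iint|D^2u|^2$ term on the right (cf.\ \eqref{3.35}--\eqref{3.37}); both yield \eqref{3.28}.
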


\begin{proof}
As in the proof of Theorem~\ref{T3.4} consider linear initial-boundary value problem \eqref{3.16}, \eqref{1.2}, \eqref{1.3}. For $T>0$ and $t_0\in (0,T]$ let $v\in X^2(\Pi_{t_0})$ 
and $u=\Lambda v$ be a solution to this problem from the space $X^2(\Pi_{t_0})$. In order to apply Lemma~\ref{L2.7} we have to estimate $f=-vv_x$ in $L_1(0,t_0;H^2)$. For example, by virtue of \eqref{1.5}
\begin{multline}\label{3.30}
\|vv_{xxx}\|_{L_1(0,t_0;L_2)} \leq
\int_0^{t_0} \sup\limits_{(x,y)\in\Sigma} |v| \Bigl(\iint v_{xxx}^2\,dxdy\Bigr)^{1/2}\,dt \\ \leq
c\int_0^{t_0} \Bigl(\iint \bigl(|D^2v|^2+v^2\bigr)\,dxdy\Bigr)^{1/2} \Bigl(\iint v_{xxx}^2\,dxdy\Bigr)^{1/2}\,dt \\ \leq
ct_0^{1/2} \|v\|_{C([0,t_0];H^2)} \|v\|_{L_2(0,t_0;H^3)} \leq ct_0^{1/2}\|v\|^2_{X^2(\Pi_{t_0})},
\end{multline}
\begin{multline}\label{3.31}
\|v_xv_{xx}\|_{L_1(0,t_0;L_2)} \leq
\int_0^{t_0} \sup\limits_{(x,y)\in\Sigma} |v_x| \Bigl(\iint v_{xx}^2\,dxdy\Bigr)^{1/2}\,dt \\ \leq
c\int_0^{t_0} \Bigl(\iint \bigl(|D^3v|^2+v_x^2\bigr)\,dxdy\Bigr)^{1/2} \Bigl(\iint v_{xx}^2\,dxdy\Bigr)^{1/2}\,dt \\ \leq
ct_0^{1/2} \|v\|_{L_2(0,t_0;H^3)} \|v\|_{C([0,t_0];H^2)} \leq ct_0^{1/2}\|v\|^2_{X^2(\Pi_{t_0})}.
\end{multline}
Other terms can be estimated in a similar way and, 
therefore, the hypothesis of Lemma~\ref{L2.7} is satisfied and the mapping $\Lambda$ exists. Moreover, inequalities \eqref{2.11}, \eqref{3.30}, \eqref{3.31} provide that
\begin{equation}\label{3.32}
\|\Lambda v\|_{X^2(\Pi_{t_0})} \leq c(T,\delta)\bigl[\|u_0\|_{H^2} + t_0^{1/2}\|v\|^2_{X^2(\Pi_{t_0})}\bigr].
\end{equation}
Moreover, one can similarly show that
\begin{multline}\label{3.33}
\|\Lambda v -\lambda \widetilde v\|_{X^2(\Pi_{t_0})} \leq c(T,\delta)\bigl[\|u_0-\widetilde u_0\|_{H^2}  \\+ 
t_0^{1/2}\bigl(\|v\|_{X^2(\Pi_{t_0})}+\|\widetilde v\|_{X^2(\Pi_{t_0})}\bigr) 
\|v-\widetilde v\|_{X^2(\Pi_{t_0})}\bigr].
\end{multline}
Local well-posedness of problem \eqref{1.1}--\eqref{1.3} on the time interval $(0,t_0)$ depending on $\|u_0\|_{H^2}$ follows from \eqref{3.32}, \eqref{3.33} by the standard argument.

In order to extend this local solution to an arbitrary time interval establish the corresponding a priori estimate. Let 
$u\in X^2(\Pi_{T'})$ be a solution to problem \eqref{1.1}--\eqref{1.3}. Again apply Lemma~\ref{L2.7}, where 
$f\equiv -uu_x$. It follows from equality \eqref{2.12} that
\begin{multline}\label{3.34}
\iint (u_{xx}^2+u_{xy}^2+u_y^2)\,dxdy 
+2\delta\int_0^{t_0} \iint (u^2_{xxx}+2u^2_{xxy}+2u^2_{xyy}+u^2_{yyy})\,dxdyd\tau  \\
=\iint (u_{0\,xx}^2+u_{0\,xy}^2+u_{0\,yy}^2)\,dxdy  \\ -2 \int_0^{t_0}\iint \bigl((uu_x)_{xx}u_{xx}+(uu_x)_{xy}u_{xy}+(uu_x)_{yy}u_{yy}\bigr)\,dxdyd\tau.
\end{multline}
Here by virtue of \eqref{1.5} and \eqref{3.14}
\begin{multline}\label{3.35}
\iint |uu_{xx}u_{xxx}|\,dxdy  \leq 
\Bigl(\iint u^4\,dxdy \iint u_{xx}^4\,dxdy\Bigr)^{1/4} \Bigl(\iint u_{xxx}^2\,dxdy\Bigr)^{1/2} \\ \leq
c_1\Bigl(\iint (|Du|^2+u^2)\,dxdy\Bigr)^{1/2} \Bigl(\iint u_{xx}^2\,dxdy\,\Bigr)^{1/4}  
\Bigl(\iint \bigl(|D^3u|^2+|D^2u|^2\bigr)\,dxdy\Bigr)^{3/4}\\ \leq
\varepsilon \iint |D^3u|^2\,dxdy +c(\varepsilon) \Bigl(\iint (|Du|^2+u^2)\,dxdy\Bigr)^2 \iint |D^2u|^2\,dxdy \\ \leq
\varepsilon \iint |D^3u|^2\,dxdy +c(\varepsilon,\|u_0\|_{H^1}) \iint |D^2u|^2\,dxdy,
\end{multline}
\begin{multline}\label{3.36}
\iint |u_x|u_{xx}^2\,dxdy \leq 
\Bigl(\iint u_x^2\,dxdy\Bigr)^{1/2} \Bigl(\iint u_{xx}^4\,dxdy\,\Bigr)^{1/2} \\ \leq
c\Bigl(\iint u_x^2\,dxdy\Bigr)^{1/2} \Bigl(\iint u_{xx}^2\,dxdy\,\Bigr)^{1/2}  
\Bigl(\iint \bigl(|D^3u|^2+|D^2u|^2\bigr)\,dxdy\Bigr)^{1/2}\\ \leq
\varepsilon \iint |D^3u|^2\,dxdy +c(\varepsilon) \iint u_x^2\,dxdy \iint |D^2u|^2\,dxdy \\ \leq
\varepsilon \iint |D^3u|^2\,dxdy +c(\varepsilon,\|u_0\|_{H^1}) \iint |D^2u|^2\,dxdy,
\end{multline}
where $\varepsilon>0$ can be chosen arbitrarily small. Other terms in the right side of \eqref{3.34} are estimated in a similar way. Combining \eqref{3.34}--\eqref{3.36} yields an inequality
\begin{equation}\label{3.37}
\sup\limits_{t\in (0,T')} \iint |D^2u|^2\,dxdy + \int_0^{T'}\!\! \iint |D^3u|^2\,dxdydt  \leq 
c(T',\|u_0\|_{H^1})\|u_0\|_{H^2}^2.
\end{equation}
This estimate provides the desired global well-posedness and, moreover, estimate \eqref{3.28}.

Finally, note that for the solution $u\in X^2(\Pi_T)$ estimates \eqref{3.35}, \eqref{3.36} ensure that 
$\iint \bigl((uu_x)_{xx}u_{xx}+(uu_x)_{xy}u_{xy}+(uu_x)_{yy}u_{yy}\bigr)\,dxdy \in L_1(0,T)$. Therefore, it follows from \eqref{3.34} that 
$\bigl\||D^2u|(t,\cdot,\cdot)\bigr\|^2_{L_2}$ is absolutely continuous and equality \eqref{3.29} holds.
\end{proof}

\begin{corollary}\label{C3.7}
Let $u_0\in H^s\cap H_0^1$ for a certain $s\in (1,2)$. Then problem \eqref{1.1}--\eqref{1.3} has a unique weak solution $u$ in $\Pi$ such that $u \in X^s(\Pi_T)$ for any $T>0$. Moreover,
\begin{equation}\label{3.38}
\|u\|_{X^s(\Pi_T)} \leq \varkappa_s(T,\|u_0\|_{H^1}) \|u_0\|_{H^s},
\end{equation}
where the positive function $\varkappa_s$ is nondecreasing with respect to its arguments.
\end{corollary}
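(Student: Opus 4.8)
The plan is to repeat verbatim the nonlinear-interpolation argument already used for Corollary~\ref{C3.5}, but now interpolating between the two endpoint cases $s=1$ and $s=2$ supplied by Theorems~\ref{T3.4} and~\ref{T3.6}. I would set up the data of Lemma~\ref{L1.3} (Tartar) as follows: take the initial-data spaces $B_0^1=H^1_0$, $B_1^1=H^2\cap H^1_0$, the solution spaces $B_0^2=X^1(\Pi_T)$, $B_1^2=X^2(\Pi_T)$, and let $\mathcal A$ be the solution operator $u_0\mapsto u$ for problem \eqref{1.1}--\eqref{1.3}, which by Theorem~\ref{T3.3} is well defined on all of $L_2$, hence on $B_0^1$ and on $B_1^1\subset B_0^1$, and is consistent across these spaces (the $X^1$- and $X^2$-solutions of Theorems~\ref{T3.4} and~\ref{T3.6} are the same weak solution). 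Theorem~\ref{T3.4} states that $\mathcal A$ maps $B_0^1$ into $B_0^2$ with Lipschitz continuity on balls, i.e. $\|\mathcal A u_0-\mathcal A\widetilde u_0\|_{X^1(\Pi_T)}\leq c_1(\|u_0\|_{H^1},\|\widetilde u_0\|_{H^1})\|u_0-\widetilde u_0\|_{H^1}$ with $c_1$ nondecreasing in its arguments; this is exactly the first hypothesis of Lemma~\ref{L1.3}. Theorem~\ref{T3.6}, specifically estimate \eqref{3.28}, gives for $h\in B_1^1$ that $\|\mathcal A h\|_{X^2(\Pi_T)}\leq\varkappa_2(T,\|h\|_{H^1})\|h\|_{H^2}$; since $\|h\|_{H^1}$ is the $B_0^1$-norm of $h$ and $\|h\|_{H^2}$ the $B_1^1$-norm, this is precisely the second hypothesis with $c_2(\cdot)=\varkappa_2(T,\cdot)$, again nondecreasing.

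Next I would identify the two interpolation scales. For $\theta=s-1\in(0,1)$ the results of Lions--Magenes \cite{LM} give $\bigl(H^1_0,\,H^2\cap H^1_0\bigr)_{\theta,2}=H^s\cap H^1_0$ (the homogeneous Dirichlet condition $u|_{y=0}=u|_{y=L}=0$ being meaningful and stable throughout the range, as $s>1/2$ and no second boundary condition is added in passing from $H^1_0$ to $H^2\cap H^1_0$), so that $B_\theta^1=H^s\cap H^1_0$ is exactly the data space in the statement. As in the proof of Corollary~\ref{C3.5}, the solution spaces form the same scale: $\bigl(X^1(\Pi_T),\,X^2(\Pi_T)\bigr)_{\theta,2}=X^s(\Pi_T)$, since $C([0,T];\cdot)$ and $L_2(0,T;\cdot)$ commute with real interpolation in the spatial variable and $(H^1,H^2)_{\theta,2}=H^s$, $(H^2\cap H^1_0,\,H^3\cap H^1_0)_{\theta,2}=H^{s+1}\cap H^1_0$. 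Lemma~\ref{L1.3} then yields that $\mathcal A$ maps $H^s\cap H^1_0$ into $X^s(\Pi_T)$ and $\|\mathcal A u_0\|_{X^s(\Pi_T)}\leq c(\|u_0\|_{H^1})\|u_0\|_{H^s}$ with $c$ nondecreasing; writing $\varkappa_s(T,\cdot)$ for $c$ gives \eqref{3.38}.

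Finally, $u=\mathcal A u_0$ so obtained lies in $X^s(\Pi_T)\subset X^0(\Pi_T)$ and, by the consistency noted above, coincides with the weak solution produced by Theorem~\ref{T3.3}, hence is a weak solution in the sense of Definition~\ref{D3.1}; uniqueness in the class of weak solutions is then immediate from Theorem~\ref{T3.3}. The argument is thus essentially bookkeeping, and I expect the only genuinely delicate point to be the identification $\bigl(X^1(\Pi_T),X^2(\Pi_T)\bigr)_{\theta,2}=X^s(\Pi_T)$ --- i.e. that the mixed time--space norm defining $X^s$ behaves correctly under real interpolation --- but this is handled exactly as in Corollary~\ref{C3.5} and rests on \cite{LM}.
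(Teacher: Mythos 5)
Your proposal is correct and follows exactly the paper's own route: nonlinear real interpolation via Tartar's lemma (Lemma~\ref{L1.3}) between the endpoint results of Theorems~\ref{T3.4} and~\ref{T3.6}, with the data scale $(H^1_0, H^2\cap H^1_0)_{\theta,2}$ identified via Lions--Magenes and the solution scale $(X^1(\Pi_T),X^2(\Pi_T))_{\theta,2}=X^s(\Pi_T)$. You merely spell out the verification of the two hypotheses and the consistency/uniqueness bookkeeping that the paper leaves implicit.
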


\begin{proof}
The hypothesis of the corollary provides that the consideed spaces for $u_0$ form the real interpolation scale $(\cdot,\cdot)_{\theta,2}$. The spaces $X^s(\Pi_T)$ also form the same interpolation scale. Then the corollary succeeds from Lemma~\ref{L1.3} and Theorems~\ref{T3.4} and~\ref{T3.6}. 
\end{proof}

\begin{corollary}\label{C3.8}
Let the hypothesis of Theorem~\ref{T3.3} be satisfied. Consider the unique weak solution to problem \eqref{1.1}--\eqref{1.3} $u$ in $\Pi$ such that $u\in X^0(\Pi_T)$ for any $T>0$. Then for any $T>0$ and $t_0\in (0,T)$
the function $u\in X^2(\Pi_{t_0,T})$.
\end{corollary}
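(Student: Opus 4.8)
The plan is to exploit the parabolic smoothing of the linear part of the equation to upgrade the $L_2$-regularity of the weak solution to $H^2$-regularity away from the initial time. First I would invoke Theorem~\ref{T3.3} to obtain the weak solution $u\in X^0(\Pi_T)$ satisfying the energy identity \eqref{3.11}; in particular $u\in L_2(0,T;H^1_0)$, so for almost every $t_1\in(0,t_0)$ one has $u(t_1,\cdot,\cdot)\in H^1_0$. Fixing such a $t_1$ and applying Theorem~\ref{T3.4} with initial data $u(t_1,\cdot,\cdot)$ at time $t_1$ (and using the uniqueness of the weak solution in $X^0$, which forces this solution to coincide with $u$ on $(t_1,T)$), I get $u\in X^1(\Pi_{t_1,T})$ and hence $u\in L_2(t_1,T;H^2)$. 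Then for almost every $t_2\in(t_1,t_0)$ one has $u(t_2,\cdot,\cdot)\in H^2\cap H^1_0$, and a further application of Theorem~\ref{T3.6} with initial data $u(t_2,\cdot,\cdot)$ at time $t_2$ yields $u\in X^2(\Pi_{t_2,T})$.

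The remaining point is to pass from a solution on $\Pi_{t_2,T}$ with $t_2$ arbitrarily close to $0$ (but positive) to membership in $X^2(\Pi_{t_0,T})$ for the originally prescribed $t_0$. Since $t_2<t_0$ can be chosen, this is immediate: $X^2(\Pi_{t_2,T})\subset X^2(\Pi_{t_0,T})$ by restriction, so $u\in X^2(\Pi_{t_0,T})$ as claimed. One should also check that the three solutions produced at the successive steps are genuinely the same function: at each stage the newly constructed object is, by construction, a weak solution to \eqref{1.1}--\eqref{1.3} in the corresponding sub-layer with the correct trace as initial data, and the uniqueness assertions in Theorems~\ref{T3.3}, \ref{T3.4}, \ref{T3.6} (all formulated for the same notion of weak solution from Definition~\ref{D3.1}) guarantee coincidence on the overlap.

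The main obstacle — really the only subtlety — is the selection of the intermediate times $t_1,t_2$ and the verification that the traces $u(t_1,\cdot,\cdot)$, $u(t_2,\cdot,\cdot)$ lie in the spaces required to restart the iteration. For $t_1$ this follows from $u\in L_2(0,T;H^1_0)$, which is part of the definition of $X^0$; for $t_2$ one needs $u(t_2,\cdot,\cdot)\in H^2\cap H^1_0$, which follows from $u\in L_2(t_1,T;H^2\cap H^1_0)$, part of the definition of $X^1$. Note that the boundary condition $u(t,x,0)=u(t,x,L)=0$ is automatically inherited by the traces since $u\in L_2(\cdot,\cdot;H^1_0)$, so no compatibility condition is lost along the way. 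Since $T>0$ is arbitrary, the conclusion holds for every $T>0$ and every $t_0\in(0,T)$.
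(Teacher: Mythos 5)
Your proposal is correct and follows essentially the same bootstrap argument as the paper: choose $t_1\in(0,t_0)$ with $u(t_1,\cdot,\cdot)\in H^1_0$ using $u\in L_2(0,T;H^1_0)$, apply Theorem~\ref{T3.4} to get $u\in X^1(\Pi_{t_1,T})$, then choose $t_2\in(t_1,t_0)$ with $u(t_2,\cdot,\cdot)\in H^2\cap H^1_0$ and apply Theorem~\ref{T3.6}. The extra remarks on uniqueness forcing coincidence of the solutions and on restricting from $\Pi_{t_2,T}$ to $\Pi_{t_0,T}$ are sound and merely make explicit what the paper leaves implicit.
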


\begin{proof}
Since $u\in L_2(0,T;H^1)$ for any $t_0\in (0,T)$ there exists $t_1\in (0,t_0)$ such that $u(t_1,\cdot,\cdot)\in H^1_0$.
Consider the function $u$ as a weak solution to an initial-boundary value problem in $\Pi_{t_1,T}$ for equation \eqref{1.1} with initial data $u_0=u(t_1,\cdot,\cdot)$ and boundary condition \eqref{1.3}. The hypothesis of Theorem~\ref{T3.4} are satisfied for this problem, therefore, $u\in X^1(\Pi_{t_1,T})$.

Similarly there exists $t_2\in (t_1,t_0)$ such that $u(t_2,\cdot,\cdot)\in H^2\cap H^1_0$. Now consider $u$ as a weak solution to a similar initial-boundary value problem but in $\Pi_{t_2,T}$, then according to Theorem~\ref{T3.6} $u\in X^2(\Pi_{t_2,T})$.
\end{proof}

\section{Long-time decay}\label{S4}

\begin{lemma}\label{L4.1}
Let $u_0\in L_2$. Then a weak solution to problem \eqref{1.3}--\eqref{1.3} from the space $X^0(\Pi_T)$ for any $T>0$ satisfies inequality 
\begin{equation}\label{4.1}
\|u(t,\cdot,\cdot)\|_{L_2} \leq e^{-\delta\pi^2 L^{-2}t}\|u_0\|_{L_2}\qquad \forall\ t\geq 0.
\end{equation}
\end{lemma}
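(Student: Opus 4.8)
The plan is to combine the $L_2$ energy identity \eqref{3.3} established in Theorem~\ref{T3.3} with the Steklov inequality \eqref{1.6} applied in the transverse variable $y$, and then integrate the resulting differential inequality by Gronwall's lemma.

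First I would recall that for the weak solution $u\in X^0(\Pi_T)$ Theorem~\ref{T3.3} gives that $t\mapsto\|u(t,\cdot,\cdot)\|_{L_2}^2$ is absolutely continuous on $[0,+\infty)$ and that
$$
\frac d{dt}\iint u^2(t,x,y)\,dxdy+2\delta\iint\bigl(u_x^2(t,x,y)+u_y^2(t,x,y)\bigr)\,dxdy=0
\qquad\text{for a.e. }t>0.
$$
Since $u\in L_2(0,T;H^{1}\cap H^1_0)$, for a.e.\ $t>0$ we have $u(t,\cdot,\cdot)\in H_0^1$, so that for a.e.\ such $t$ and a.e.\ $x\in\mathbb R$ the slice $y\mapsto u(t,x,y)$ belongs to $H_0^1(0,L)$; applying \eqref{1.6} to this slice and integrating in $x$ yields
$$
\iint u^2(t,x,y)\,dxdy\le\frac{L^2}{\pi^2}\iint u_y^2(t,x,y)\,dxdy
\le\frac{L^2}{\pi^2}\iint\bigl(u_x^2+u_y^2\bigr)(t,x,y)\,dxdy.
$$

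Substituting this lower bound for the dissipation term into the energy identity gives, for a.e.\ $t>0$,
$$
\frac d{dt}\iint u^2(t,x,y)\,dxdy\le-\frac{2\delta\pi^2}{L^2}\iint u^2(t,x,y)\,dxdy.
$$
Because $\|u(t,\cdot,\cdot)\|_{L_2}^2$ is absolutely continuous, Gronwall's inequality applies and yields
$$
\iint u^2(t,x,y)\,dxdy\le e^{-2\delta\pi^2L^{-2}t}\iint u_0^2(x,y)\,dxdy
\qquad\forall\,t\ge0,
$$
and taking square roots produces \eqref{4.1}.

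I do not expect a serious obstacle here; the only point requiring a little care is the measurability/applicability of the Steklov inequality on almost every horizontal line, which is immediate from $u(t,\cdot,\cdot)\in H_0^1$ for a.e.\ $t$ together with Fubini's theorem. Everything else is a direct consequence of the already-proved identity \eqref{3.3} and elementary Gronwall estimation.
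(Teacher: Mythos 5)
Your argument is correct and is essentially the same as the paper's: both combine the energy identity \eqref{3.3} with the Steklov inequality \eqref{1.6} applied in $y$ to obtain the differential inequality \eqref{4.3}, and then integrate it to get \eqref{4.1}. The extra care you take about applying \eqref{1.6} on almost every slice is fine but not elaborated in the paper.
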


\begin{proof}
Consider equality \eqref{3.3}. With the use of inequality \eqref{1.6} we derive that
\begin{equation}\label{4.2}
\iint u_y^2\,dxdy \geq  \frac{\pi^2}{L^2}\iint u^2\,dxdy 
\end{equation}
and it follows \eqref{3.3} that
\begin{equation}\label{4.3}
\frac d{dt} \iint u^2\,dxdy +\frac {2\delta\pi^2}{L^2} \iint u^2\,dxdy \leq 0,
\end{equation}
which yields \eqref{4.1}.
\end{proof}

\begin{lemma}\label{L4.2}
Let $u_0\in H^1_0$. Then a weak solution to problem \eqref{1.3}--\eqref{1.3} from the space $X^1(\Pi_T)$ for any $T>0$ satisfies inequality \eqref{1.4} for $s=1$.
\end{lemma}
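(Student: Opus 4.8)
The plan is to establish exponential decay of the $H^1$-norm by working with the energy identity \eqref{3.15} and absorbing the nonlinear term using the smallness already guaranteed by Lemma~\ref{L4.1}. First I would rewrite the right-hand side of \eqref{3.15}, namely $2\iint uu_x(u_{xx}+u_{yy})\,dxdy$, and estimate it by the interpolation inequality \eqref{1.5}: one writes
$$
\Bigl|2\iint uu_x(u_{xx}+u_{yy})\,dxdy\Bigr| \leq c\bigl\||D^2u|\bigr\|_{L_2}\bigl\|u\bigr\|_{L_4}\bigl\|u_x\bigr\|_{L_4},
$$
and then bounds $\|u\|_{L_4}$ and $\|u_x\|_{L_4}$ in terms of $\bigl\||D^2u|\bigr\|_{L_2}$, $\bigl\||Du|\bigr\|_{L_2}$ and $\|u\|_{L_2}$. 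The crucial point is that the resulting power of $\bigl\||D^2u|\bigr\|_{L_2}$ is strictly less than $2$, so Young's inequality yields
$$
\Bigl|2\iint uu_x(u_{xx}+u_{yy})\,dxdy\Bigr| \leq \delta\iint(u_{xx}^2+2u_{xy}^2+u_{yy}^2)\,dxdy + c(\delta)\,\varphi\bigl(\|u(t,\cdot,\cdot)\|_{L_2}\bigr)\iint|Du|^2\,dxdy,
$$
for an appropriate nondecreasing $\varphi$, where I have split off half of the dissipative term on the left of \eqref{3.15}.

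Next I would bound the remaining half of the dissipative term from below: since $u(t,\cdot,\cdot)\in H^1_0$ in the $y$-variable for a.e.\ $t$, the Steklov inequality \eqref{1.6} gives $\iint u_{xy}^2\,dxdy \geq \pi^2 L^{-2}\iint u_x^2\,dxdy$ and $\iint u_{yy}^2\,dxdy \geq \pi^2 L^{-2}\iint u_y^2\,dxdy$, hence
$$
\delta\iint(u_{xx}^2+2u_{xy}^2+u_{yy}^2)\,dxdy \geq \frac{\delta\pi^2}{L^2}\iint(u_x^2+u_y^2)\,dxdy.
$$
Combining this with the nonlinear estimate, \eqref{3.15} becomes a differential inequality
$$
\frac d{dt}\iint(u_x^2+u_y^2)\,dxdy + \Bigl(\frac{\delta\pi^2}{L^2} - c(\delta)\varphi\bigl(\|u(t,\cdot,\cdot)\|_{L_2}\bigr)\Bigr)\iint(u_x^2+u_y^2)\,dxdy \leq 0.
$$

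The main obstacle is that $\varphi(\|u(t,\cdot,\cdot)\|_{L_2})$ need not be small for $t$ near $0$, so the coefficient in the differential inequality above is not obviously positive from the start; however, by Lemma~\ref{L4.1} we have $\|u(t,\cdot,\cdot)\|_{L_2}\leq e^{-\delta\pi^2 L^{-2}t}\|u_0\|_{L_2}$, so this norm decays to zero and in particular there is a time $T_0 = T_0(\|u_0\|_{L_2})$ after which $c(\delta)\varphi(\|u(t,\cdot,\cdot)\|_{L_2}) \leq \tfrac{\delta\pi^2}{2L^2}$. For $t\geq T_0$ one then obtains $\bigl\||Du|(t,\cdot,\cdot)\bigr\|_{L_2}^2 \leq e^{-\delta\pi^2 L^{-2}(t-T_0)}\bigl\||Du|(T_0,\cdot,\cdot)\bigr\|_{L_2}^2$. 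For $0\leq t\leq T_0$ I would simply use the already-established a priori bound \eqref{3.14}, which gives $\bigl\||Du|(t,\cdot,\cdot)\bigr\|_{L_2}\leq \varkappa_1(T_0,\|u_0\|_{L_2})\|u_0\|_{H^1}$; since $e^{-\beta t}$ is bounded below by $e^{-\beta T_0}$ on $[0,T_0]$, this is also of the required form. Choosing $\beta(1) = \delta\pi^2/(2L^2)$ (or any value up to $\delta\pi^2/L^2$) and a suitably enlarged nondecreasing function $\sigma_1$ absorbing $e^{\beta T_0}\varkappa_1(T_0,\cdot)$ and the value $\bigl\||Du|(T_0,\cdot,\cdot)\bigr\|_{L_2}/\|u_0\|_{H^1}$ then yields \eqref{1.4} for $s=1$. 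A technical subtlety is that the differential inequality argument should be run via the absolute continuity of $\bigl\||Du|(t,\cdot,\cdot)\bigr\|_{L_2}^2$ and equality \eqref{3.15}, both guaranteed by Theorem~\ref{T3.4}, rather than assuming classical differentiability.
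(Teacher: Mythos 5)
Your overall architecture---start from the energy identity \eqref{3.15}, use the $L_2$-decay of Lemma~\ref{L4.1} to find a time $T_0(\|u_0\|_{L_2})$ after which the nonlinearity is absorbed by the dissipation, run the resulting differential inequality for $t\ge T_0$ via absolute continuity, and cover $[0,T_0]$ by \eqref{3.14}---is exactly the paper's. However, the intermediate estimate you call crucial is not correct, and the linear differential inequality you derive from it does not follow. You claim
$$
\Bigl|2\iint uu_x(u_{xx}+u_{yy})\,dxdy\Bigr| \leq \delta\iint(u_{xx}^2+2u_{xy}^2+u_{yy}^2)\,dxdy + c(\delta)\,\varphi\bigl(\|u\|_{L_2}\bigr)\iint|Du|^2\,dxdy .
$$
A derivative count shows the residual cannot be merely quadratic in $\bigl\||Du|\bigr\|_{L_2}$: the majorant $\|u\|_{L_4}\|u_x\|_{L_4}\bigl\||D^2u|\bigr\|_{L_2}$ carries four derivatives in total, the same as $\bigl\||D^2u|\bigr\|_{L_2}^2$. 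If you interpolate it into $c\,\bigl\||D^2u|\bigr\|^{a}\bigl\||Du|\bigr\|^{b}\|u\|^{c}$ with $2a+b=4$, $a+b+c=3$, and apply Young's inequality to peel off $\varepsilon\bigl\||D^2u|\bigr\|^{2}$ (which forces $a<2$), the conjugate term is $\bigl\||Du|\bigr\|^{2b/(2-a)}\|u\|^{2c/(2-a)}$ with $2b/(2-a)=2(4-2a)/(2-a)=4$ for every admissible choice. So the residual is necessarily of order $\bigl\||Du|\bigr\|_{L_2}^{4}$ times a power of $\|u\|_{L_2}$ (a rescaling $u\mapsto\mu v(\lambda\,\cdot)$ shows your displayed inequality is actually false for any fixed $\delta$ and any $\varphi$), and your inequality for $Y=\iint|Du|^2$ becomes a Riccati-type one, $Y'+\delta' Y\le c\,\varphi(\|u\|_{L_2})Y^2$, which does not close without a uniform-in-$t$ bound on $Y$ --- which is what you are trying to prove.

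The fix is precisely the paper's move: do \emph{not} absorb the top-order part by Young's inequality with a fixed $\delta$; instead estimate, as in \eqref{4.4},
$$
\Bigl|\iint uu_x(u_{xx}+u_{yy})\,dxdy\Bigr| \le c\,\sup_{\Sigma}|u|\;\|u_x\|_{L_2}\bigl\||D^2u|\bigr\|_{L_2} \le c_1\,\rho\bigl(\|u\|_{L_2}\bigr)\iint\bigl(|D^2u|^2+u^2\bigr)\,dxdy
$$
with $\rho(\theta)\to0$ as $\theta\to0$, i.e.\ keep the full-strength term $\iint|D^2u|^2$ in the bound but multiplied by a coefficient controlled by $\|u\|_{L_2}$, and only then use \eqref{4.1} to choose $T_1(\|u_0\|_{L_2})$ so that this coefficient is at most $\delta$ for $t\ge T_1$; the term is then absorbed \emph{linearly} by $2\delta\iint(u_{xx}^2+2u_{xy}^2+u_{yy}^2)$. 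The paper also adds \eqref{3.3} to \eqref{3.15}, so that the damping of $\iint|Du|^2$ comes for free from the term $2\delta\iint|Du|^2$ of \eqref{3.3} and the Steklov inequality is applied only to $u$ itself. On that point, your application of \eqref{1.6} to $u_y$ is not legitimate as stated: $u_y$ is the normal derivative and does not vanish on $y=0,L$. The inequality $\iint u_{yy}^2\ge\pi^2L^{-2}\iint u_y^2$ is nevertheless true, via $\iint u_y^2=-\iint uu_{yy}\le\|u\|_{L_2}\|u_{yy}\|_{L_2}$ combined with \eqref{1.6} for $u$, but it needs that extra step. The remainder of your argument (the time split at $T_0$, the use of \eqref{3.14} on $[0,T_0]$, the assembly of $\sigma_1$ and $\beta(1)$, and the appeal to absolute continuity of $\bigl\||Du|(t,\cdot,\cdot)\bigr\|_{L_2}^2$) matches the paper and is fine once the absorption step is repaired.
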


\begin{proof}
Consider equality \eqref{3.15}. By virtue of \eqref{1.5}
\begin{multline}\label{4.4}
\Bigl| \iint uu_x(u_{xx}+u_{yy})\,dxdy \Bigr|  \leq
c\sup\limits_{(x,y)\in\Sigma}|u|\Bigl(\iint u_x^2\,dxdy\Bigr)^{1/2}\Big(\iint|D^2u|^2\,dxdy\Bigr)^{1/2} \\ \leq
c_1\iint \bigl(|D^2u|^2+u^2\bigr)\,dxdy \iint u^2\,dxdy.
\end{multline}
Choose $T_1=T_1(\|u_0\|_{L_2})>0$ such that according to \eqref{4.1}
\begin{equation}\label{4.5}
\iint u^2(t,x,y)\,dxdy \leq \min\bigl(\frac{\delta}{2c_1},\frac{\delta\pi^2}{2c_1L^2}\bigr)\qquad \forall\ t\geq T_1,
\end{equation}
where $c_1$ is the constant from \eqref{4.4}. Then summing \eqref{3.15}, \eqref{3.3} and applying \eqref{1.6} yields
\begin{equation}\label{4.6}
\frac{d}{dt} \iint \bigl(|Du|^2+u^2)\,dxdy + \delta\iint |Du|^2\,dxdy +\frac{\delta\pi^2}{2L^2} \iint u^2\,dxdy \leq 0
\quad \forall t\geq T_1.
\end{equation}
Since according to \eqref{3.14}
\begin{equation}\label{4.7}
\|u(T_1,\cdot,\cdot)\|_{H^1} \leq c(T_1,\|u_0\|_{L_2})\|u_0\|_{H^1}
\end{equation}
inequality \eqref{4.6} provides the desired result.
\end{proof}

\begin{lemma}\label{L4.3}
Let $u_0\in H^2\cap H_0^1$. Then a weak solution to problem \eqref{1.3}--\eqref{1.3} from the space $X^2(\Pi_T)$ for any $T>0$ satisfies inequality \eqref{1.4} for $s=2$.
\end{lemma}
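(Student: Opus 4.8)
The plan is to mimic the $H^1$ decay argument of Lemma~\ref{L4.2}, now starting from the energy identity \eqref{3.29} for $\||D^2u|\|_{L_2}^2$. First I would estimate the nonlinear term on the right-hand side of \eqref{3.29}. Expanding $(uu_x)_{xx}=u_{xxx}u+3u_{xx}u_x$, $(uu_x)_{xy}=u_{xxy}u+2u_{xy}u_x+u_{yy}u_x+\dots$ (and similarly for the $yy$ term), every summand is a product of three factors among $u$, $|D u|$, $|D^2 u|$, $|D^3 u|$ with the total number of derivatives equal to five and at most one factor carrying three derivatives. Using Lemma~\ref{L1.2} to control the lower-order factors in $L_\infty$ or $L_4$ (for instance $\|u\|_{L_\infty}\leq c(\||D^2u|\|_{L_2}^{1/2}\|u\|_{L_2}^{1/2}+\|u\|_{L_2})$, $\||Du|\|_{L_4}\leq c(\||D^2u|\|_{L_2}^{3/4}\|u\|_{L_2}^{1/4}+\|u\|_{L_2})$, $\||D^2u|\|_{L_4}\le c(\||D^3u|\|^{3/4}_{L_2}\|u\|^{1/4}_{L_2}+\dots)$), together with Young's inequality, I would bound the whole nonlinear term by
$$
\Bigl|2\iint\bigl((uu_x)_{xx}u_{xx}+(uu_x)_{xy}u_{xy}+(uu_x)_{yy}u_{yy}\bigr)\,dxdy\Bigr|
\le \varepsilon\iint|D^3u|^2\,dxdy + c(\varepsilon)\,M\iint|D^2u|^2\,dxdy + c(\varepsilon)\,M'\iint u^2\,dxdy,
$$
where $M,M'$ are powers of $\iint(|D^2u|^2+u^2)\,dxdy$ and $\iint(|Du|^2+u^2)\,dxdy$; these are exactly the kinds of estimates already carried out in \eqref{3.35}, \eqref{3.36}. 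The point is that the coefficient multiplying $\iint|D^2u|^2\,dxdy$ is a nondecreasing function of $\||Du|(t,\cdot,\cdot)\|_{L_2}$ and $\||D^2u|(t,\cdot,\cdot)\|_{L_2}$.

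Next I would exploit the decay already proved at the lower levels. By Lemma~\ref{L4.1} and Lemma~\ref{L4.2}, both $\|u(t,\cdot,\cdot)\|_{L_2}$ and $\||Du|(t,\cdot,\cdot)\|_{L_2}$ tend to zero as $t\to+\infty$, with rates controlled by $\sigma_0,\sigma_1$ applied to $\|u_0\|_{L_2}$, $\|u_0\|_{H^1}$. However, $\||D^2u|(t,\cdot,\cdot)\|_{L_2}$ is not yet known to be small; to get around this I would use the smoothing Corollary~\ref{C3.8}, or more directly the integrated form of \eqref{3.29}, to choose a time $T_1=T_1(\|u_0\|_{H^1})$ at which $\||D^2u|(T_1,\cdot,\cdot)\|_{L_2}$ is controlled, and such that for all $t\ge T_1$ the quantities $\iint(|Du|^2+u^2)\,dxdy$ and hence the coefficient $c(\varepsilon)M$ above is $\le \delta/2$ (say). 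This is the analogue of \eqref{4.5}. For $t\ge T_1$, adding to \eqref{3.29} suitable multiples of \eqref{3.15} and \eqref{3.3}, using the ellipticity on the left (each of $\iint|D^3u|^2$, via the Steklov inequality \eqref{1.6} applied in $y$ to the top-order terms, dominates $\iint|D^2u|^2$ up to a constant, and similarly down the scale), I would absorb the $\varepsilon\iint|D^3u|^2$ term and obtain a differential inequality
$$
\frac{d}{dt}\,E(t) + \beta(2)\,E(t)\le 0\qquad\forall\,t\ge T_1,
$$
where $E(t)=\iint\bigl(|D^2u|^2+\mu_1|Du|^2+\mu_0 u^2\bigr)\,dxdy$ for appropriate fixed positive $\mu_0,\mu_1$, so that $E$ is equivalent to $\|u(t,\cdot,\cdot)\|_{H^2}^2$.

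From here the conclusion is routine: Gronwall gives $E(t)\le E(T_1)e^{-\beta(2)(t-T_1)}$ for $t\ge T_1$, while for $t\in[0,T_1]$ estimate \eqref{3.28} of Theorem~\ref{T3.6} bounds $\|u(t,\cdot,\cdot)\|_{H^2}$ by $\varkappa_2(T_1,\|u_0\|_{H^1})\|u_0\|_{H^2}$. Combining the two ranges and adjusting the constant in front of the exponential yields \eqref{1.4} for $s=2$ with $\beta(2)$ as above and $\sigma_2(\theta)$ a nondecreasing function built from $\varkappa_2$, $T_1(\theta)$ and $e^{\beta(2)T_1(\theta)}$; note $s_0=1$ here since $T_1$ and all the constants depend only on $\|u_0\|_{H^1}$ (and through Lemma~\ref{L4.2} on $\|u_0\|_{L_2}\le\|u_0\|_{H^1}$). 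The main obstacle is the bookkeeping in the first step: one must check that \emph{every} term produced by expanding the three second-order derivatives of $uu_x$ can indeed be estimated so that the only top-order factor appears linearly (i.e.\ to the first power inside an $\varepsilon$-absorbable $\iint|D^3u|^2$), and that the residual coefficients depend only on $\iint(|D^2u|^2+u^2)$ and lower — otherwise the smallness mechanism of \eqref{4.5} would fail to close the estimate. Once that is verified, the rest parallels Lemma~\ref{L4.2} verbatim.
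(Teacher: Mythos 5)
Your strategy is the paper's: start from \eqref{3.29}, absorb the third-order derivatives via Young's inequality into the dissipation, use the already-proved lower-order decay to make the remaining coefficient small after some time $T_2(\|u_0\|_{H^1})$, add the lower-order identities \eqref{3.15}, \eqref{3.3} to get a coercive differential inequality, and handle the initial interval with \eqref{3.28}. One point, however, needs tightening, and it is the substantive one: you allow the coefficient $M$ of $\iint|D^2u|^2\,dxdy$ to be ``a power of $\iint(|D^2u|^2+u^2)\,dxdy$,'' i.e.\ to depend on $\||D^2u|(t,\cdot,\cdot)\|_{L_2}$. With that dependence the smallness mechanism does \emph{not} close with a time depending only on $\|u_0\|_{H^1}$: smallness of $\iint(|Du|^2+u^2)\,dxdy$ for large $t$ says nothing about $\iint|D^2u|^2\,dxdy$ there (the bound \eqref{3.28} is not uniform in $T$), so you could not conclude $c(\varepsilon)M\le\delta/2$ on $[T_1,+\infty)$ without an additional bootstrap, which would at best make $T_1$ depend on $\|u_0\|_{H^2}$ and spoil $s_0=1$ in \eqref{1.4}. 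The verification you defer does come out right: every term in the expansion of the right side of \eqref{3.29} has the form $u\cdot D^3u\cdot D^2u$ or $Du\cdot(D^2u)^2$, and exactly as in \eqref{3.35}, \eqref{3.36} each is bounded by $\varepsilon\iint|D^3u|^2\,dxdy+c(\varepsilon)\,f\bigl(\iint(|Du|^2+u^2)\,dxdy\bigr)\iint|D^2u|^2\,dxdy$, with a coefficient involving only the $H^1$ norm, which decays by Lemma~\ref{L4.2}; this is inequality \eqref{4.8}. You should state and check this explicitly rather than leave it as an ``obstacle.''

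A minor slip besides: Steklov's inequality \eqref{1.6} cannot be applied to the top-order terms $u_{xyy}$, $u_{yyy}$ to dominate $\iint u_{xy}^2\,dxdy$, $\iint u_{yy}^2\,dxdy$, since $u_{xy}$ and $u_{yy}$ do not vanish at $y=0$ and $y=L$. The coercive term $\delta\iint|D^2u|^2\,dxdy$ on the left should instead be harvested from the dissipation in \eqref{3.15} (and the $\iint|Du|^2\,dxdy$, $\iint u^2\,dxdy$ terms from \eqref{3.3} together with \eqref{1.6} applied to $u$ itself), which is what \eqref{4.9} does; since you add those identities anyway, this does not change the outcome.
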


\begin{proof}
Consider equality \eqref{3.29}. Similarly to \eqref{3.35}, \eqref{3.36}
\begin{multline*}
\iint |uu_{xx}u_{xxx}|\,dxdy + \iint |u_x|u_{xx}^2\,dxdy   \\ \leq
\varepsilon \iint |D^3u|^2\,dxdy +c(\varepsilon) \iint (|Du|^2+u^2)\,dxdy \iint |D^2u|^2\,dxdy,
\end{multline*}
where $\varepsilon>0$ can be chosen arbitrarily small. Other terms in the right side of \eqref{3.29} are estimated in the same way. Therefore, it follows from this equality that
\begin{equation}\label{4.8}
\frac{d}{dt} \iint |D^2u|^2\,dxdy \leq c_2 \iint (|Du|^2+u^2)\,dxdy \iint |D^2u|^2\,dxdy.
\end{equation}
Summing this inequality with \eqref{3.3}, \eqref{3.15} and applying \eqref{1.6}, \eqref{4.4} yields that
\begin{multline}\label{4.9}
\frac{d}{dt} \iint \bigl(|D^2u|^2+|Du|^2+u^2\bigr)\,dxdy +\delta\iint\bigl(2|D^2u|^2+|Du|^2\bigr)\,dxdy
+\frac{\delta\pi^2}{L^2} \iint u^2\,dxdy  \\ \leq
c_1\iint \bigl(|D^2u|^2+u^2\bigr)\,dxdy \iint u^2\,dxdy \\+
c_2 \iint (|Du|^2+u^2)\,dxdy \iint |D^2u|^2\,dxdy.
\end{multline}
Choose $T_2=T_2(\|u_0\|_{H^1})$ such that according to \eqref{1.4} for $s=1$
$$
\iint \bigl(|Du|^2+u^2\bigr)\,dxdy \leq 
\min\bigl(\frac{\delta}{2c_1},\frac{\delta\pi^2}{2c_1L^2},\frac{\delta}{2c_2}\bigr)\qquad \forall\ t\geq T_2,
$$
then it follows from \eqref{4.9} that
\begin{multline}\label{4.10}
\frac{d}{dt} \iint \bigl(|D^2u|^2+|Du|^2+u^2\bigr)\,dxdy \\+
\delta\iint \bigl(|D^2u|^2+|Du|^2\bigr)\,dxdy +\frac{\delta\pi^2}{2L^2} \iint u^2\,dxdy \leq 0
\quad \forall t\geq T_2.
\end{multline}
Since according to \eqref{3.28}
\begin{equation}\label{4.7}
\|u(T_2,\cdot,\cdot)\|_{H^2} \leq c(T_2,\|u_0\|_{H^1})\|u_0\|_{H^2}
\end{equation}
inequality \eqref{4.10} provides the desired result.
\end{proof}

Now we can finish the proof of Theorem~\ref{T1.1}.

\begin{proof}[Proof of Theorem~\ref{T1.1}]
It still remains to prove exponential decay of solutions for $s\in (0,1)$ and $s\in (1,2)$.

Let $s\in (0,1)$. By virtue of Corollary~\ref{C3.5} it is sufficient to prove \eqref{1.4} for $t\geq 1$. Then Corollary~\ref{C3.8} yields that $u\in X^1(\Pi_{1,T})$ for any $T>1$. We have with use of \eqref{1.4} for $s=0$, $s=1$ and \eqref{3.14} that
\begin{multline}\label{4.12}
\|u(t,\cdot,\cdot)\|_{H^s} \leq c(s)\|u\|_{L_2}^{1-s}\|u\|_{H^1}^s \\ \leq 
c_1e^{-\beta(0)(1-s)t}\|u_0\|_{L_2}^{1-s} e^{-\beta(1)s(t-1)}\|u(1,\cdot,\cdot)\|_{H^1}^s \\ \leq
c_1e^{\beta(1)s}e^{-(\beta(0)(1-s)+\beta(1)s)t}\varkappa_1^s(1,\|u_0\|_{L_2})\|u_0\|_{L_2},
\end{multline}
whence \eqref{1.4} follows. The case $s\in (1,2)$ is considered in a similar way with use of \eqref{1.4} for $s=1$, $s=2$ and \eqref{3.28} .
\end{proof}

\bibliographystyle{amsplain}

\end{document}